
 \documentclass[12pt,reqno]{amsart}
\setlength{\columnseprule}{0.4pt}
\setlength{\topmargin}{0cm}
\setlength{\oddsidemargin}{.25cm}
\setlength{\evensidemargin}{.25cm}
\setlength{\textheight}{23cm}
\setlength{\textwidth}{16cm}

 \newtheorem{thm}{Theorem}[section]

 \newtheorem{prop}[thm]{Proposition}
 \theoremstyle{definition}
 \newtheorem{defn}[thm]{Definition}
 \newtheorem{rem}{Remark}
 
 \numberwithin{equation}{section}

\usepackage[a4paper,left=30mm,right=30mm,top=30mm,bottom=30mm,marginpar=25mm]{geometry}
\usepackage{amsmath, amssymb}
\usepackage{amsfonts}
\usepackage{amsthm}
\usepackage{amsthm}
\usepackage{mathrsfs}
\usepackage{comment}
\usepackage{color}
\usepackage[normalem]{ulem} 
\usepackage{enumerate}

\usepackage[nocompress, space]{cite}
\usepackage[displaymath,mathlines]{lineno}
\usepackage{eurosym}
\usepackage[dvips]{graphics}
\usepackage{graphicx}
\usepackage{epsfig}
\usepackage{mathtools}

\usepackage[dvipdfmx]{hyperref}
\usepackage[nocompress, space]{cite}
\usepackage{caption}
\usepackage{subcaption}



\newcommand{\ep}{\varepsilon}

\newcommand{\R}{\mathbb{R}}
\newcommand{\T}{\mathbb{T}}

\newcommand{\Z}{\mathbb{Z}}

\def\ds{\displaystyle}


\newcommand{\Li}{L^{\infty}}
\newcommand{\Lil}{L^{\infty}_{{\rm loc}}}
\newcommand{\Lip}{{\rm Lip\,}}
\newcommand{\Lipl}{{\rm Lip}_{{\rm loc}}}
\newcommand{\W}{W^{1,\infty}}

\theoremstyle{definition}

\numberwithin{equation}{section}





\title[Entropy and viscosity solutions]{On equivalence of entropy and viscosity solutions to degenerate parabolic equations and applications}

\author{Hiroyoshi Mitake}
\address{Graduate School of Mathematical Sciences, University of Tokyo, 3-8-1 Komaba,
Meguroku, Tokyo, Japan 153-8914}
\email{mitake@ms.u-tokyo.ac.jp}

\author{Hiroshi Watanabe}
\address{Department of Science and Technology, Faculty of Science and Technology, Oita University 700 Dannoharu, Oita, Japan 870-1192}
\email{hwatanabe@oita-u.ac.jp}

\thanks{H. M. 
The first author was partially supported by the JSPS grants: 
KAKENHI  
21H04431,
22K03382, 
24K00531, 
H. W 
The second author was partially supported by the JSPS grants: 
KAKENHI 
20K03696, 
21K03312.
}

\keywords{Parabolic-hyperbolic equation, degenerate Hamilton-Jacobi equations, entropy solutions, viscosity solutions.}
\subjclass[2010]{
35K65, 
35D30, 
49L25} 
\date{\today}

\begin{document}
\maketitle

\begin{abstract}
Here, we consider anisotropic degenerate parabolic-hyperbolic equations and degenerate quasilinear Hamilton-Jacobi equations. 
We prove the equivalence of two notions of entropy and viscosity solutions of two equations, and apply it to obtain a large-time behavior of viscosity solutions to quasilinear Hamilton-Jacobi equations, and entropy solutions to degenerate parabolic-hyperbolic equations in a periodic setting. 
\end{abstract}


\section{Introduction}

In this paper we study the Cauchy problems for anisotropic degenerate parabolic-hyperbolic equations of the form 
\begin{equation}\label{CL}\tag*{(CL)}
\left\{ 
\begin{array}{ll}
\displaystyle \partial_{t} u + \partial_{x} f(x,u) = \partial_{x} ( \alpha(x) \partial_{x}u + \partial_{x}\beta(u))
& \text{in}  \ \R \times (0,\infty), \\
u(x,0)=u_{0}(x) 
& \text{in} \ \R, 
\end{array} 
\right.
\end{equation}
and quasilinear Hamilton-Jacobi equations of the form 
\begin{equation}\label{HJ}\tag*{(HJ)}
\left\{ \begin{array}{ll}
\displaystyle 
\partial_{t} v + f(x, \partial_{x}v) = ( \alpha(x) + \beta'(\partial_{x}v)) \partial^2_{x}v 
& 
\text{in}  \ \R \times (0,\infty), \\
v(x,0)=v_{0}(x)& 
\text{in} \ \R, 
\end{array} \right.
\end{equation}
where $u, v:\R\times[0,\infty)\to\R$ are unknown functions. 
Here, $f : \R \times \R \to \R$, $\alpha, \beta, u_0, v_0: \R\to\R$ are given functions and 
\textit{throughout} the paper we impose the assumptions: 
\begin{enumerate}
\item[(A1)]
$f\in\Lipl(\R \times \R)$,  $f(\cdot,u) \in L^{\infty}(\R)$ for all $u\in\R$, and  
\[
\lim_{|u|\to\infty}\inf_{x\in\R}\frac{1}{2}f(x,u)^2-(\|\alpha\|_{\Li} + \|\beta'\|_{\Li}+ 1)|(\partial_x f)(x,u)u|=\infty,  
\]
\item[(A2)] 
$\partial_{x}f, \partial_{x}^{2} f\in L^{1}(\R_{x} ; L^{\infty}_{{\rm loc}}(\R_{u}))$, 
where we rewrite $\R\times\R$ as $\R_x\times\R_u$ to make the variable clear, 
\item[(A3)] 
$\partial_{u}f, \partial_{u} \partial_{x} f \in L^{\infty}(\R_{x} ; L^{\infty}_{{\rm loc}}(\R_{u}))$, 

\item[(A4)]
$\alpha \in W^{1, \infty}(\R)$ is nonnegative, and $\beta \in C^{1}(\R) \cap \Lip(\R)$ is nondecreasing, 

\item[(A5)] 
$u_{0} \in W^{1,\infty}(\R) \cap BV(\R)$, and $v_0(x):= \int_0^x u_0(y)\,dy$. 
\end{enumerate}

\bigskip
Anisotropic degenerate parabolic-hyperbolic equations and degenerate quasilinear Hamilton--Jacobi equations, which 
are natural generalizations of conservation law equations, and first-order Hamilton--Jacobi equations, respectively, 
are studied well in several contexts (see \cite{CK, CP1, CP2, MT, P19, W17}, \cite{CGMT, GMT}, and references therein). 
Formally it is clear to see that if $u$ is a solution to \ref{CL} then $v(x,t):=\int_0^{x}u(y,t)\,dy + \int_{0}^{t} (-f(x,u)(0,\tau) + (\alpha(x) \partial_{x}u + \partial_{x} \beta(u))(0,\tau)) \,d\tau$ 
is a solution to \ref{HJ}. 
Symmetrically, we can formally expect the converse statement, that is, if $v$ is a solution to \ref{HJ}, then $u:=\partial_{x}v$ is a solution to \ref{CL}. 
We \text{call} this relation an equivalence of solutions to \ref{CL} and \ref{HJ}.  
However we notice that since both \ref{CL}, \ref{HJ} are degenerate parabolic equations, one cannot expect the existence of time global classical solutions. It is rather standard to study \ref{CL} and \ref{HJ} by using two notions of weak solutions, 
entropy solutions and viscosity solutions, respectively. 
It turns out that the equivalence of two weak solutions of \ref{CL} and \ref{HJ} is non-trivial. 
Such an equivalence of weak solutions has been justified for the conservation law and first-order Hamilton--Jacobi equations, 
i.e., under the setting $\alpha=\beta\equiv0$ in \ref{CL}, \ref{HJ} in \cite{C, CFN,P20}. 

In this paper we first establish the equivalence of the entropy solution to \ref{CL} and the viscosity solution to \ref{HJ}. 
\begin{thm}\label{thm:main}
Assume that {\rm(A1)--(A5)} hold. 
Let $v \in \Lip(\R \times [0,\infty))$ be the unique viscosity solution to {\rm\ref{HJ}}. 
Then, $u = \partial_{x}v\in C_{{\rm loc}}([0,\infty) ; L^{1}(\R)) \cap L^{\infty}(\R \times (0,\infty))$ is the unique entropy solution to {\rm\ref{CL}}.  
Conversely, let $u\in C_{{\rm loc}}([0,\infty) ; L^{1}(\R)) \cap L^{\infty}(\R \times (0,\infty))
$ be the unique entropy solution to {\rm\ref{CL}}. 
Then, 
there exists $\widehat{C}(t) \in C([0, \infty))$ satisfying $\widehat{C}(0)=0$ such that $v(x,t) := \int_{0}^{x} u(y, t) \,dy + \widehat{C}(t) \in\Lip(\R \times [0,\infty))$  
is the unique viscosity solution to {\rm\ref{HJ}}.  
\end{thm}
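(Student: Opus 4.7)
The plan is to prove both implications at once by coupling the two problems through a common vanishing viscosity regularization. Fix $\ep>0$, let $u_0^\ep$ be a smooth mollification of $u_0$, and set $v_0^\ep(x):=\int_0^x u_0^\ep(y)\,dy$. I consider the regularizations of \ref{CL} and \ref{HJ} obtained by adding $\ep\pa_x^2$ to each right-hand side; both are then uniformly parabolic, so classical quasilinear parabolic theory provides smooth global solutions $u^\ep$ and $v^\ep$. A direct computation shows that differentiating the regularized \ref{HJ} in $x$ produces the regularized \ref{CL} for $\pa_x v^\ep$, with matching initial data $u_0^\ep$; hence $u^\ep=\pa_x v^\ep$ for every $\ep>0$.

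Next I would establish $\ep$-independent estimates on $u^\ep$: an $L^\infty$ bound from a maximum-principle argument using the coercivity in (A1); a bound on $\|u^\ep(\cdot,t)\|_{BV(\R)}$ propagated from the $BV(\R)$ initial data by differentiating the equation in $x$ and using (A2)--(A4); and an $L^1$-equicontinuity in $t$ obtained by reading $\pa_t u^\ep$ off the equation. Through $u^\ep=\pa_x v^\ep$ these translate into a uniform Lipschitz bound on $v^\ep$ in $x$; inserting this into the regularized \ref{HJ} yields the corresponding bound in $t$. Along a subsequence, $u^\ep\to u$ in $C_{\rm loc}([0,\infty);L^1(\R))$ and almost everywhere, while $v^\ep\to v$ locally uniformly on $\R\times[0,\infty)$.

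The limits are then identified as the expected weak solutions. The standard stability of viscosity solutions under vanishing viscosity (each $v^\ep$ is in particular a classical, hence viscosity, solution of a uniformly parabolic equation) shows that $v$ is the viscosity solution of \ref{HJ}. A Kru\v{z}kov-type doubling-of-variables argument adapted to the anisotropic degenerate diffusion $\pa_x(\al(x)\pa_x u+\pa_x\be(u))$ (cf.\ \cite{CP1,CP2,CK,W17}) shows that $u$ is an entropy solution of \ref{CL}. The identity $u^\ep=\pa_x v^\ep$ passes to the limit in the distributional sense, and since $v$ is Lipschitz, $\pa_x v$ is well defined a.e.\ and equals $u$. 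Uniqueness of entropy and of viscosity solutions finally closes both implications, whether one starts from the viscosity solution $v$ and defines $u:=\pa_x v$, or from the entropy solution $u$ and defines $v(x,t):=\int_0^x u(y,t)\,dy$, the latter being Lipschitz thanks to $u\in L^\infty\cap C([0,\infty);L^1)$.

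The main obstacle I anticipate is the limit passage in the degenerate diffusion term $\be'(\pa_x v^\ep)\pa_x^2 v^\ep$ of the regularized \ref{HJ}: $\pa_x^2 v^\ep=\pa_x u^\ep$ is only uniformly bounded in $L^\infty_t L^1_x$ via the $BV$-estimate, so pointwise passage is unavailable. The observation that unlocks this is that this term equals $\pa_x\be(\pa_x v^\ep)=\pa_x\be(u^\ep)$, for which only the uniform boundedness of $\be(u^\ep)$ and $u^\ep\to u$ a.e.\ are needed inside the distributional formulation. The anisotropic piece $\al(x)\pa_x^2 v^\ep$ is treated analogously via the integration by parts $\al(x)\pa_x^2 v^\ep=\pa_x(\al(x)\pa_x v^\ep)-\al'(x)\pa_x v^\ep$, using $\al\in W^{1,\infty}(\R)$ from (A4). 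The coupling between the entropy side (which controls $u^\ep$) and the viscosity side (which controls $v^\ep$) through the pointwise identity $u^\ep=\pa_x v^\ep$ is precisely what makes this limit passage work and, I expect, is the structural reason the equivalence holds under (A1)--(A5).
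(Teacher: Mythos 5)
Your route is the same as the paper's: regularize both problems by adding $\ep\partial_x^2$, observe that $\partial_x v^\ep$ solves the regularized \ref{CL} so that $u^\ep=\partial_x v^\ep$, prove $\ep$-uniform $L^\infty$, $BV$ and $L^1$-in-time estimates, pass to the limit (stability of viscosity solutions on one side, compactness to an entropy solution on the other), and close both implications with the two uniqueness theorems --- in particular, deducing the converse direction from the forward one plus uniqueness of entropy solutions is exactly what the paper does.

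There is, however, one genuine gap, and it sits precisely in the step you single out as the ``main obstacle.'' Rewriting $\beta'(\partial_xv^\ep)\partial_x^2v^\ep=\partial_x\beta(u^\ep)$ and invoking a.e.\ convergence of $u^\ep$ together with boundedness of $\beta(u^\ep)$ only handles the conservative (distributional) form of \ref{CL}. The notion of entropy solution used here additionally requires $\sqrt{\alpha(x)+\beta'(u)}\,\partial_xu\in L^2_{\rm loc}$ and the entropy inequality with the quadratic dissipation term $\eta''(u)(\alpha(x)+\beta'(u))(\partial_xu)^2$ on its right-hand side; neither is reachable from a.e.\ convergence alone. What is needed is an $\ep$-uniform $L^2_{\rm loc}$ energy estimate on $\sqrt{\alpha(x)+\beta'(u^\ep)+\ep}\,\partial_xu^\ep$, the resulting weak $L^2_{\rm loc}$ convergence of $\alpha(x)\partial_xu^\ep+\partial_x(\beta(u^\ep)+\ep u^\ep)$, the identification of the weak limit with $\alpha(x)\partial_xu+\partial_x\beta(u)$ (a chain-rule issue for degenerate diffusions), and weak lower semicontinuity to pass to the limit in the dissipation term with the inequality pointing the right way. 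The paper supplies exactly this through the maximum principle for the total flux $w^\ep=-f(x,u^\ep)+\alpha(x)\partial_xu^\ep+\partial_x\beta(u^\ep)+\ep\partial_xu^\ep$ in Proposition \ref{prop-flux} and the $L^2$ estimate quoted from \cite[Lemma 3.7]{W17} in Proposition \ref{prop-entropy}; without it your limit $u$ is only a weak solution, not an entropy solution. A smaller point: ``doubling of variables'' is the uniqueness mechanism for entropy solutions, not the way one verifies that the vanishing-viscosity limit satisfies the entropy inequality --- the latter is done by multiplying the regularized equation by $\eta'(u^\ep)$ and passing to the limit as above.
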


Moreover, we apply Theorem \ref{thm:main} to study the large-time behavior of weak solutions to \ref{CL} and \ref{HJ} in the periodic setting with some conditions. 
We add the assumption:  
\begin{enumerate}
\item[(A6)] The functions $u_{0}$, $v_{0}$, $x\mapsto f(x,u)$ and $\alpha$ are $\Z$-periodic for all $u\in\R$. 
\end{enumerate}
We denote by $\T:=\R / \mathbb{Z}$ the $1$-dimensional torus. 
Let $I=[a,b]$ be the connected component of points such that 
$f(x,u) \equiv f(u)$ is affine and $\beta$ is constant in a neighborhood at $u=0$, that is, 
\begin{align}
I=[a,b]
:=
&\text{the maximal connected subset of} \nonumber\\
&\hspace*{24pt}
\{A\subset\tilde{I}\mid A \ \text{is a connected set, and} \ 0\in A\}, \label{def:I}
\\
\tilde{I}:=&
\{u\in\R\mid f(u)=c_1u + c_{2}, \ \beta(u)=c_3 \ \text{for some} \ c_1, c_2, c_{3}\in\R\}. 
\nonumber
\end{align}
It is clear to see that $a\le 0\le b$ from the definition. 
Note that $I$ can be $\{0\}$. 

\begin{thm}\label{thm:large-HJ}
Assume that {\rm(A1)--(A4)}, {\rm(A6)} with 
\begin{enumerate}
\item[{\rm (A5)'}] $u_0\in W^{1,\infty}(\R)$ and $v_0(x):=\int_0^xu_0(y)\,dy$, and 
\item[{\rm(A7)}] $f(x,u) \equiv f(u)$, $\alpha(x)= 0$ for all $(x,u)\in\T\times\R$  
\end{enumerate} 
hold. 
Let $v$ be the viscosity solution to {\rm\ref{HJ}}. 
There exists $(V,d)\in \Lip(\T)\times\R$ such that 
\[
\|v(\cdot,t)+f(0)t-V(\cdot-dt)\|_{\Li(\T)}\to0
\quad\text{as} \ t\to\infty. 
\]
Moreover, $V$ satisfies the condition 
\[
a(y_2-y_1)\le V(y_2)-V(y_1)\le b(y_2-y_1) 
\quad\text{for} \ y_1, y_2\in\R 
\ \text{with} \ y_1<y_2, 
\]
where $I=[a, b]$ is given by \eqref{def:I}, and $d$ is given by $f(u)-f(0)=du$ for $u\in I=[a,b]$. 
In case $I=\{0\}$, we have $V(x)\equiv 0$. 
\end{thm}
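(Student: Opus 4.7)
The plan is to reduce \ref{HJ} under (A7) to the conservation law \ref{CL} via Theorem~\ref{thm:main}, establish the large-time behavior at the (CL) level on the torus, and then integrate back to recover the asymptotic profile of $v$. Under (A7), \ref{CL} becomes $\pa_t u + \pa_x f(u) = \pa_x^2 \be(u)$; by (the periodic analogue of) Theorem~\ref{thm:main}, $u := \pa_x v$ is its unique entropy solution with datum $u_0 = v_0' \in W^{1,\infty}(\T)$. Since $v_0$ is $\Z$-periodic by (A6), $\int_\T u_0\,dy = v_0(1)-v_0(0) = 0$, and the divergence form preserves this mean, so $\int_\T u(\cdot, t)\,dy = 0$ for all $t \ge 0$.

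First I would prove the (CL)-level asymptotics: there exists a $\Z$-periodic $U_\infty \in L^\infty(\R)$ with $U_\infty(y) \in [a,b]$ a.e.\ and $\int_\T U_\infty = 0$ such that
\[
\|u(\cdot, t) - U_\infty(\cdot - dt)\|_{L^1(\T)} \to 0 \quad \text{as } t \to \infty.
\]
The $L^1$-contraction property and the preserved $BV$-seminorm yield precompactness of $\{u(\cdot, t)\}_{t \ge 0}$ in $L^1(\T)$, and the $\omega$-limit set is nonempty and semigroup-invariant. The identification of every element as a traveling wave valued in $[a,b]$ is the heart of the argument: by the maximality of $I$ in \eqref{def:I}, outside $I$ either $f$ is not affine or $\be$ is non-constant, and either situation produces strict entropy dissipation (via Kruzkov entropies, or through the $L^2$-dissipation $\int_0^\infty \int_\T \be'(u)(\pa_x u)^2\,dx\,dt < \infty$) incompatible with the semigroup invariance of any $\omega$-limit point. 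Once $U_\infty \in [a,b]$ a.e., $\be|_I$ is constant and $f|_I(u) = du + f(0)$, so the limit reduces to pure transport $\pa_t U + d \pa_x U = 0$, whose $\Z$-periodic entropy solutions are precisely the traveling waves $U_\infty(\cdot - dt)$.

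To lift to the HJ level, set $V(y) := C_0 + \int_0^y U_\infty(z)\,dz$. This lies in $\Lip(\T)$ (periodicity follows from $\int_\T U_\infty = 0$), and $V' \in [a,b]$ a.e.\ gives the stated slope bound. Spatial integration yields
\[
\sup_{x \in \T} \Big| \bigl(v(x,t) - v(0,t)\bigr) - \bigl(V(x-dt) - V(-dt)\bigr) \Big| \le \|u(\cdot, t) - U_\infty(\cdot - dt)\|_{L^1(\T)} \to 0.
\]
To pin down the $t$-dependent additive constant, average \ref{HJ} over $\T$: periodicity kills the diffusion term, leaving
\[
\frac{d}{dt} \int_\T v(x,t)\,dx = -\int_\T f(u(x,t))\,dx.
\]
Since $U_\infty \in I$ a.e.\ with zero mean and $f|_I(u) = du + f(0)$, the right-hand side tends to $-f(0)$, and combined with the $L^2$-dissipation bound one verifies this is integrable in $t$, so $\int_\T v(\cdot, t)\,dx + f(0) t$ has a limit $\bar V \in \R$. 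Choosing $C_0$ with $\int_\T V\,dy = \bar V$ yields the claimed uniform convergence. When $I = \{0\}$, one has $U_\infty \equiv 0$, forcing $V$ to collapse to a constant $\overline V$.

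The \emph{hardest step} is the $\omega$-limit characterization: the equation is mixed parabolic-hyperbolic (parabolic where $\be' > 0$, purely hyperbolic where $\be' = 0$ and $f$ may be strictly convex/concave), so showing that all oscillations outside $I$ are dissipated typically requires combining entropy-flux identities with compensated-compactness, Young-measure, or Oleinik-type one-sided Lipschitz estimates adapted to this degenerate setting. A secondary delicate point is extracting $L^\infty$-convergence of $v$ from $L^1$-convergence of $u$: this hinges on the absolute integrability in $t$ of $\int_\T f(u(\cdot, t))\,dx - f(0)$, needed to identify the additive constant $C_0$ uniquely.
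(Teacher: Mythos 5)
Your overall reduction (pass to $u=\partial_x v$ via Theorem \ref{thm:main}, obtain traveling-wave asymptotics for the conservation law on $\T$, integrate back) is the same as the paper's, and your (CL)-level statement is exactly \cite[Theorem 1.1, Corollary 1.1]{P19}, which the paper simply cites rather than re-proves; your sketch of that step (compensated compactness / Young measures / entropy dissipation outside $I$) is only a program, not a proof, but since the result is in the literature this is not the real problem.

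The genuine gap is in how you recover the additive constant. You control the oscillation $v(x,t)-v(0,t)$ by $\|u(\cdot,t)-U(\cdot-dt)\|_{L^1(\T)}$, which is fine, but then you try to pin down the constant by integrating $\frac{d}{dt}\int_\T v\,dx=-\int_\T f(u)\,dx$ in time. What you actually know is that $\int_\T f(u(\cdot,t))\,dx\to f(0)$, i.e.\ the derivative of $\int_\T v\,dx+f(0)t$ tends to zero; without a rate this does \emph{not} imply that the quantity converges, and no rate is available from the qualitative $L^1$-convergence or from the $L^2$-dissipation bound (which controls $\beta'(u)(\partial_x u)^2$, not the hyperbolic part of $f(u)-f(0)-du$). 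The paper avoids this entirely with a comparison-principle device: because $U$ takes values in $[a,b]$, where $f(u)-du\equiv f(0)$ and $\beta$ is constant, the translate $V(\cdot-dt)$ is itself an exact viscosity solution of the shifted equation; hence $m(t)=\min_\T(\tilde v-V(\cdot-dt))$ is nondecreasing and $M(t)=\max_\T(\tilde v-V(\cdot-dt))$ is nonincreasing, so both limits exist automatically, and $M(t)-m(t)\le\|u(\cdot,t)-U(\cdot-dt)\|_{L^1(\T)}\to0$ forces them to coincide. That monotonicity argument is the missing idea; without it (or some substitute giving convergence of the spatial average) your proof does not close.
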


\begin{thm}\label{thm:large-CL}
Assume that {\rm(A1)--(A4), (A5)', (A6)} and 
\begin{enumerate}
\item[{\rm(A8)}] $u\mapsto f(x,u)$ is strictly convex, $f\in C^2(\T\times\R)$, $\beta(u)=0$ for all $(x,u)\in\T\times\R$. 
\end{enumerate}
hold. 
Let $u$ be the entropy solution to {\rm \ref{CL}}. There exists 
$\widehat{C}(t) \in C([0,\infty))$ satisfying $\widehat{C}(0)=0$ and  $(U,c) \in L^{\infty}(\T)\times\R$ such that 
\begin{equation*}
\lim_{t \to + \infty} \int_{\T} (u(y,t) + \widehat{C}(t) - (U(y) - ct)) \,dy = 0. 
\end{equation*}
\end{thm}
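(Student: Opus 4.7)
The plan is to use Theorem \ref{thm:main} to transfer the question to the large-time behavior of the viscosity solution $v(x,t) := \int_0^x u(y,t)\,dy$ of \ref{HJ}, which under (A8) is a (possibly degenerate parabolic) Hamilton--Jacobi equation with strictly convex first-order Hamiltonian on the torus, and then to recover the convergence of $u = \partial_x v$ by a $BV$-compactness argument.

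Integrating \ref{CL} over one period and using (A6) shows that the mean $m := \int_0^1 u_0(y)\,dy = \int_0^1 u(y,t)\,dy$ is conserved for every $t \geq 0$. Hence $v(\cdot + 1, t) = v(\cdot, t) + m$, and $\tilde v(x,t) := v(x,t) - m x$ is $\Z$-periodic in $x$ and is the unique viscosity solution on $\T \times (0, \infty)$ of
\begin{equation*}
\partial_t \tilde v + g(x, \partial_x \tilde v) = \alpha(x) \partial_x^2 \tilde v,
\end{equation*}
where $g(x, p) := f(x, p + m)$ inherits $C^2$ regularity and strict convexity in $p$ from (A8). Invoking the established large-time asymptotics for viscosity solutions of strictly convex (possibly degenerate parabolic) HJ equations on the torus (see, e.g., \cite{CGMT, MT} and references therein), there exist a unique ergodic constant $c^\ast \in \R$, a Lipschitz corrector $V$ solving the cell problem $g(x, V'(x)) - \alpha(x) V''(x) = c^\ast$ in the viscosity sense, and a constant $k \in \R$ such that
\begin{equation*}
\|\tilde v(\cdot, t) + c^\ast t - V - k\|_{\Li(\T)} \to 0 \quad \text{as } t \to \infty.
\end{equation*}

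To pass this convergence to $u = \partial_x \tilde v + m$, I combine the uniform $\Li$-bound on $u$ with a semiconcavity/Oleinik-type one-sided estimate $\partial_x u(\cdot, t) \leq C$ uniformly in $t \geq 1$, which holds under the strict convexity assumption (A8) by standard semiconcavity theory for convex HJ equations on the torus. The periodic identity $\int_\T \partial_x u\,dx = 0$ upgrades this one-sided bound to the uniform two-sided total variation bound $\mathrm{TV}(u(\cdot, t)) \leq 2C$, so $\{u(\cdot, t)\}_{t \geq 1}$ is relatively compact in $L^1(\T)$. Any $L^1$-accumulation point $U$ of $u(\cdot, t_n)$ satisfies $\int_0^x (U(y) - m)\,dy = V(x) - V(0)$ by passing to the limit in the identity $\tilde v(x, t_n) - \tilde v(0, t_n) = \int_0^x (u(y, t_n) - m)\,dy$ (using uniform convergence of $\tilde v + c^\ast t$ up to additive constants), hence $U = V' + m$ a.e. Uniqueness of this limit upgrades subsequential convergence to convergence of the whole family $u(\cdot, t) \to U$ in $L^1(\T)$, and conservation $\int_\T u(\cdot, t)\,dy \equiv m$ forces the constant $c$ in the statement to be $0$.

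The two main obstacles are: (i) invoking or adapting the sharpest large-time asymptotic result for $\tilde v$ in the degenerate parabolic ($\alpha \geq 0$), strictly convex case on $\T$, where the possible degeneracy of $\alpha$ complicates the standard weak KAM/Barles--Souganidis machinery and requires careful verification of the hypotheses of the HJ theorems cited; and (ii) establishing the semiconcavity estimate for the $x$-dependent strictly convex flux $f$ and upgrading it via periodicity to the uniform $BV$ bound on $u(\cdot, t)$, which is standard in the autonomous setting but requires extra care in the presence of the explicit $x$-dependence allowed by (A8).
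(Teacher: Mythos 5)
Your proposal follows the same overall route as the paper: transfer the problem to \ref{HJ} via Theorem \ref{thm:main}, invoke the large-time result of \cite{CGMT} (Theorem \ref{thm:CGMT}) for the degenerate viscous Hamilton--Jacobi equation, and read off the behavior of $u=\partial_x v$. Two of your refinements are precisely the points where the paper is terse. First, you periodize $v$ by subtracting $mx$ with $m=\int_{\T}u_0$; the paper applies Theorem \ref{thm:CGMT} directly to $v(x,t)=\int_0^xu(y,t)\,dy$, which is $\Z$-periodic only when $m=0$, so your reduction to $g(x,p)=f(x,p+m)$ is the honest way to invoke the cited theorem. Second, the paper passes in one line from the uniform convergence of $\int_0^x(u(y,t)-U(y))\,dy-ct$ to the $L^1(\T)$ convergence of $u(\cdot,t)-(U-ct)$; as you implicitly recognize, uniform convergence of primitives does not by itself give $L^1$ convergence of derivatives (highly oscillatory $g_t$ can have $\int_0^xg_t\to0$ uniformly while $\|g_t\|_{L^1(\T)}$ stays bounded away from $0$), so some $L^1(\T)$-precompactness of $\{u(\cdot,t)\}_{t\ge 1}$ is genuinely needed, and your BV argument is aimed at supplying exactly that. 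You are also right that the uniform $\Li$ bound on $u$ together with conservation of the mean forces $c=0$ in the stated convergence.

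The genuine gap in your version is the compactness step itself. The Oleinik/semiconcavity bound $\partial_x u(\cdot,t)\le C$ for $t\ge 1$ is standard for \emph{uniformly} convex Hamiltonians, but (A8) only assumes strict convexity: for a flux behaving like $u^4$ near $u=0$ the Legendre transform has unbounded second derivative at the origin and the uniform one-sided estimate fails, and the explicit $x$-dependence together with the possibly degenerate coefficient $\alpha(x)\ge 0$ adds further obstructions that you flag but do not resolve. As written, your argument is complete only under the stronger hypothesis $\partial_u^2f\ge\theta>0$ on the compact range of values of $u$. A variant worth noting: since $(V,c)$ solves $f(x,\partial_xV)=\alpha(x)\partial_x^2V+c$, the function $U=\partial_xV$ is a stationary entropy solution of \ref{CL}, so the $L^1(\T)$-contraction of the entropy semigroup makes $t\mapsto\|u(\cdot,t)-U\|_{L^1(\T)}$ nonincreasing; this cleanly reduces the problem to showing the limit is zero, but one still needs compactness (or a dissipation/entropy-decay argument) at that point, so the difficulty is localized rather than removed. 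In short: same strategy as the paper, with a correctly identified missing compactness ingredient whose proposed justification is not yet valid under the stated hypotheses.
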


\medskip
Theorems \ref{thm:large-HJ}, \ref{thm:large-CL} are rather straightforward results of 
\cite[Theorem 1.1]{P19} and \cite[Theorem 1.1]{CGMT} by using our equivalence result, Theorem \ref{thm:main}. 
However, to the best of our knowledge, this type of results on the large-time behavior for \ref{CL} and \ref{HJ} have not been studied in the literature. 

We conclude this introduction by giving some related works to \ref{CL} and \ref{HJ}. 
A well-posedness of entropy solutions to \ref{CL} has been established in \cite{CK, CP1, MT, W17} 
under the setting $f(x,u)=f(u)$ and $a(x)=0$ for $(x,u)\in\R^2$ in a multidimensional case. 
A well-posedness of viscosity solutions to \ref{HJ} is rather standard (see \cite{CIL} for instance). 
The large time behavior of periodic solutions to \ref{CL} and \ref{HJ} has been studied in several settings 
(see \cite{CP2, CGMT, GMT, W21}, and references therein).

\medskip
Organization of this paper. 
In Section \ref{sec:pre}, we recall the definitions of entropy solutions and viscosity solutions, and 
some preliminary results. The proof of Theorem \ref{thm:main} is given in Section \ref{sec:equiv}. 
Section \ref{sec:app} is devoted to the study of the large-time behavior for \ref{CL} and \ref{HJ}.

\section{Preliminaries}\label{sec:pre}
We recall the definitions of entropy solutions of \ref{CL}, and viscosity solutions of \ref{HJ}.  
We say that $(\eta,q,r)$ is a convex entropy flux triplet if 
$\eta\in C^2(\R)$ is convex, and  
\begin{equation*}
\partial_{u} q(x,u) = \eta'(u) \partial_{u}f(x,u),\ \ \partial_{u} r(x,u) = \eta'(u) (\alpha(x) + \beta'(u))
\quad\text{for} \ a.e. \ (x,u)\in\R\times\R,   
\end{equation*} 
which is a generalization of the entropy flux pair introduced by Kru\v{z}kov \cite{Kr}. 

\begin{defn}[Entropy solution]
Let $u: \R \times [0,\infty) \to \R$ be a measurable function. 
We call $u$ an \textit{entropy solution} to \ref{CL} if 
\begin{equation}\label{cond:entropy}
u \in L^{\infty}(\R \times (0,\infty)),\quad 
\sqrt{\alpha(x) + \beta'(u)} \partial_{x}u \in L^{2}_{{\rm loc}}(\R\times(0,\infty)), 
\end{equation} 
and 
\begin{align*}
\ds & \int_{0}^{\infty} \int_{\R} \Big\{ \eta(u) \partial_{t}\varphi + q(x,u) \partial_{x}\varphi + r(x,u) \partial_{x}^{2}\varphi - \eta'(u) 
(\partial_{x}f)(x,u) \varphi + (\partial_{x}q)(x,u) \varphi \\
\ds & \hspace{15mm} + (\partial_{x}r)(x,u) \partial_{x}\varphi \Big\} \,dxdt 
 + \int_{\R} \eta(u_{0}) \varphi(x,0)\,dx \\ 
\ds & \hspace{5mm} \ge \int_{0}^{\infty}\int_{\R} \eta''(u) \big( \alpha(x) + \beta'(u) \big) (\partial_{x}u)^{2} \varphi \,dxdt 
\end{align*}
for any $\varphi \in C_{0}^{\infty}(\R \times [0, \infty),[0,\infty))$, 
and any convex entropy flux triplet $(\eta,q,r)$. 
\end{defn}

\begin{defn}[Viscosity sub/super solution]
Let $v:\R \times [0, \infty)\to\R$ be a continuous function. 
{\rm(i) } 
We call $v$ a \textit{viscosity subsolution} to \ref{HJ} if whenever there exist $(x_0,t_0)$ and 
$\varphi\in C^2(\R\times(0,\infty))$ such that 
$v - \varphi$ attains a local maximum at $(x_0,t_0)$, 
\begin{equation*}
\partial_{t}\varphi(x_0,t_0) + f(x_0,\partial_{x} \varphi(x_0,t_0))
\le ( \alpha(x_0) + \beta'(\partial_{x}\varphi(x_0,t_0)) )\partial^2_{x}\varphi(x_0,t_0). 
\end{equation*}
{\rm(ii)} 
We call $v$ a \textit{viscosity supersolution} to \ref{HJ} if whenever there exist $(x_0,t_0)$ and 
$\varphi\in C^2(\R\times(0,\infty))$ such that 
$v - \varphi$ attains a local minimum at $(x_0,t_0)$, 
\begin{equation*}
\partial_{t}\varphi(x_0,t_0) + f(x_0,\partial_{x} \varphi(x_0,t_0))
\ge ( \alpha(x_0) + \beta'(\partial_{x}\varphi(x_0,t_0)) )\partial^2_{x}\varphi(x_0,t_0). 
\end{equation*}
{\rm(iii)} 
A function $v$ is a \textit{viscosity solution} if it is both a viscosity subsolution and a viscosity supersolution.
\end{defn}

We can establish the well-posedness for \ref{CL} with a slight modification of the arguments in \cite{CK} (see also \cite{CP1}).

\medskip
We consider the approximating problems: for $\ep>0$, 
\begin{equation}\label{VCL}\tag*{(VCL)}
\left\{ \begin{array}{ll}
\ds \partial_{t} u^{\varepsilon} + \partial_{x} f(x,u^{\varepsilon}) = \partial_{x}( \alpha(x) \partial_{x}u^{\varepsilon} + \partial_{x}\beta(u^{\varepsilon})) + \varepsilon \partial_{x}^{2}u^{\varepsilon} & \text{in} \ \R \times (0,\infty)
, \\
u^{\varepsilon}(x,0)=u_{0}(x) & \text{in} \ \R, 
\end{array} \right.
\end{equation}
and 
\begin{equation}\label{VHJ}\tag*{(VHJ)}
\left\{ \begin{array}{ll}
\ds \partial_{t} v^{\varepsilon} + f(x,\partial_{x}v^{\varepsilon}) = (\alpha(x) + \beta'(\partial_{x}v^\ep))\partial^2_{x}v^\ep + \varepsilon \partial_{x}^{2} v^{\varepsilon} & \text{in} \ \R \times (0,\infty), \\
v^{\varepsilon}(x,0)=v_{0}(x) & \text{in} \ \R.
\end{array} \right.
\end{equation}

Note that by classical results of parabolic equations (see \cite{LSU} for instance) under 
assumptions (A1)--(A5) with smoothness $f\in C^2(\R^2)$, 
$\alpha, u_0\in C^2(\R)$, $\beta\in C^3(\R)$  \ref{VCL}, \ref{VHJ} have classical solutions. 
Since the constants $C$ in the estimates in Propositions \ref{prop-viscosity}--\ref{prop-entropy} does not depend 
on the smoothness of $f$, $\alpha$, $\beta$, by approximation, we may assume that these are smooth enough.  


\begin{prop}\label{prop-viscosity}
There exists $C>0$ which only depends on $f, \|\partial_xv_0\|_{\W}, \|\beta'\|_{\Li}$, and 
$\|\alpha\|_{\Li}$ such that 
\begin{equation*}
\|\partial_{t} v^\ep\|_{\Li}+\|\partial_{x} v^\ep\|_{\Li}\le C
\quad \text{for all} \ \ep>0. 
\end{equation*}
\end{prop}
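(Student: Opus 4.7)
The bound $\|\partial_t v^\ep\|_\Li+\|\partial_x v^\ep\|_\Li\le C$ splits naturally into a spatial estimate and a temporal estimate, each of which I would attack with a parabolic maximum-principle argument applied to the PDE satisfied by a derivative of $v^\ep$. I would handle the spatial bound first and then use it to close the temporal bound.

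For the spatial bound, the key observation is that differentiating \ref{VHJ} in $x$ shows that $u^\ep:=\partial_x v^\ep$ is a classical solution of \ref{VCL} with initial datum $u_0=\partial_x v_0\in W^{1,\infty}(\R)\cap BV(\R)$. Since $\alpha(x)+\beta'(u^\ep)+\ep\ge\ep>0$, this equation is uniformly parabolic, and a maximum-principle argument is available (on $\R$ after a standard penalization of the form $u^\ep(x,t)-\sigma\jap{x}-\delta t$ with $\sigma,\delta\downarrow 0$). At an interior maximum $(x_0,t_0)$ of $u^\ep$, expanding the chain rule in $\partial_xf(x,u^\ep)$ and discarding the nonpositive dissipative contribution yields
\[
\partial_t u^\ep(x_0,t_0)\le -(\partial_x f)(x_0,u^\ep(x_0,t_0)),
\]
whose right-hand side is \emph{a priori} not controlled by $\|u^\ep\|_\Li$. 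This is exactly where {\rm(A1)} closes the loop: the coercivity of $\tfrac12 f(x,u)^2$ against $(\|\alpha\|_\Li+\|\beta'\|_\Li+1)|(\partial_x f)(x,u)\,u|$ produces a threshold $R_0=R_0(f,\alpha,\beta',\|u_0\|_\Li)$ above which the natural Lyapunov functional $\tfrac12 f(x,u^\ep)^2$ along \ref{VCL} cannot grow, so that $\|u^\ep(\cdot,t)\|_\Li\le\max\{\|u_0\|_\Li,R_0\}$ uniformly in $\ep\in(0,1]$.

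For the temporal bound, I would differentiate \ref{VHJ} in $t$: the quantity $z^\ep:=\partial_t v^\ep$ satisfies a \emph{linear} parabolic equation of the schematic form
\[
\partial_t z^\ep+\big[\partial_u f(x,\partial_x v^\ep)-\beta''(\partial_x v^\ep)\partial_x^2 v^\ep\big]\partial_x z^\ep-\big(\alpha(x)+\beta'(\partial_x v^\ep)+\ep\big)\partial_x^2 z^\ep=0,
\]
with no zero-order term. By the parabolic maximum principle, $\|z^\ep(\cdot,t)\|_\Li\le\|z^\ep(\cdot,0)\|_\Li$ for all $t\ge 0$ independently of the size of the (merely smooth) coefficients, and evaluating \ref{VHJ} at $t=0$ gives
\[
z^\ep(x,0)=-f(x,\partial_x v_0(x))+\big(\alpha(x)+\beta'(\partial_x v_0(x))+\ep\big)\partial_x^2 v_0(x),
\]
whose $L^\infty$-norm is controlled by $\|\partial_x v_0\|_{\W}=\|u_0\|_{\W}$, $\|f(\cdot,\partial_x v_0)\|_\Li$, $\|\alpha\|_\Li$, and $\|\beta'\|_\Li$, all of which appear in the allowed list of constants.

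The main obstacle is the spatial bound: one must honestly run the maximum-principle argument on the unbounded domain $\R$, in the presence of the nonlinear diffusion $\beta'(u^\ep)$, while producing constants that are uniform in $\ep$. The coercivity {\rm(A1)} is tailor-made for this purpose, as it precisely says that the destabilizing term $-\partial_x f$ that appears at a maximum is dominated by the Hamiltonian contribution inside the Lyapunov functional $\tfrac12 f(x,u)^2$; careful bookkeeping of the constants at this step (and of the penalization $\sigma\jap{x}+\delta t$) is what the proof really costs.
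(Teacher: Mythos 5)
Your temporal estimate is fine and is essentially the paper's: the paper bounds $\|\partial_t v^\ep\|_{\Li}$ by comparing $v^\ep$ with $v_0\pm Mt$, where $M=\|f(\cdot,\partial_xv_0)\|_{\Li}+(\|\alpha\|_{\Li}+\|\beta'\|_{\Li}+1)\|\partial_x^2v_0\|_{\Li}$, which amounts to the same computation as your maximum principle for $z^\ep=\partial_t v^\ep$ evaluated at $t=0$. The problem is the spatial bound, and there your argument has a genuine gap exactly at the point you flag. After discarding the dissipative terms at an interior maximum of $u^\ep$ you are left with $0\le\partial_t u^\ep(x_0,t_0)\le-(\partial_xf)(x_0,u^\ep(x_0,t_0))$, i.e.\ only the sign information $(\partial_xf)(x_0,M)\le0$ at the maximal value $M$, which bounds nothing. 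Assumption (A1) compares $\tfrac12 f(x,u)^2$ against $(\|\alpha\|_{\Li}+\|\beta'\|_{\Li}+1)|\partial_xf(x,u)\,u|$; to invoke it you must produce the term $\tfrac12 f^2$ on the \emph{good} side of your inequality, and your computation has nowhere for it to come from, because the squared second-derivative term that generates it is precisely the ``nonpositive dissipative contribution'' you threw away. The claim that (A1) makes the Lyapunov functional $\tfrac12 f(x,u^\ep)^2$ non-increasing above a threshold is an assertion, not an argument: differentiating it along the moving maximum point brings in the uncontrolled velocity of that point, and the inequality $\partial_t u^\ep\le-\partial_xf$ alone does not bound $u^\ep$.

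The paper's device, which you are missing, is a Bernstein argument on $\varphi=\tfrac12(\partial_xv^\ep)^2$ in which the term $-(\alpha+\beta'(\partial_xv^\ep)+\ep)(\partial_x^2v^\ep)^2$ is \emph{kept}. At the penalized maximum (penalization $\delta(1+|x|^2)^{1/2}$, as in your proposal) one obtains $(\alpha+\beta'+\ep)(\partial_x^2v^\ep)^2\le-\partial_xf\cdot\partial_xv^\ep+O^\ep(\delta)$; multiplying by $\alpha+\beta'+\ep$ and substituting the equation $(\alpha+\beta'+\ep)\partial_x^2v^\ep=\partial_tv^\ep+f(x,\partial_xv^\ep)$ turns the left-hand side into $(\partial_tv^\ep+f)^2\ge\tfrac12f^2-C$, where $C$ uses the \emph{previously established} time-derivative bound. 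This substitution is what manufactures $\tfrac12 f^2$ and lets (A1) close the estimate; it also forces the two bounds to be proved in the reverse of your order (time first, then space). Without it your spatial estimate does not close.
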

\begin{proof}
Setting $M:=\|f(x,\partial_xv_0)\|_{\Li}+\big( \|\alpha\|_{\Li} + \|\beta'\|_{\Li}+1\big)\|\partial^2_{x}v_0\|_{\Li}
<\infty$, 
we can easily see that 
$v^{\varepsilon}_{\pm}(x,t) = v_{0}(x) \pm Mt$
are a super/sub solution to (VHJ). 
By the comparison principle, 
we get 
\begin{equation*}
|v^{\varepsilon}(x, t+s) - v^{\varepsilon}(x,t)| \le |v^{\varepsilon}(x,s) - v^{\varepsilon}(x,0)| \le Ms 
\ \text{for all} \ x\in\R, t,s>0,  
\end{equation*}
which implies $\|\partial_t v^\ep\|_{\Li}\le M$. 

Next, set $\varphi := \frac{1}{2}(\partial_{x}v^{\varepsilon})^{2}$. 
Differentiating \ref{VHJ} on $x$ and multiplying by $\partial_{x}v^{\varepsilon}$, we obtain 
\begin{equation*}
\begin{array}{l}
\ds \partial_{t} \varphi + (\partial_{x}f)(x,\partial_{x}v^{\ep})\partial_{x}v^{\ep} + \partial_{u}f(x,\partial_{x}v^{\varepsilon})\partial_{x}\varphi \vspace{2mm}\\
\ds = ( \partial_{x}\alpha(x) + \beta''(\partial_{x}v^{\varepsilon})\partial_{x}^{2}v^{\varepsilon}) \partial_{x}\varphi + (\alpha(x) + \beta'(\partial_{x}v^{\varepsilon}) + \varepsilon) (\partial_{x}^{2} \varphi - (\partial_{x}^{2}v^{\varepsilon})^{2})
. 
\end{array}
\end{equation*}
Fix any $T>0$, and for any $\delta>0$, set $\varphi^\delta(x,t):=\varphi(x,t)-\delta(1+|x|^2)^{\frac{1}{2}}$. 
Then, 
\begin{multline*}
\partial_{t} \varphi^{\delta} + (\partial_{x}f)(x,\partial_{x}v^{\ep})\partial_{x}v^{\ep} 
\\
= 
\big(-\partial_{u}f(x,\partial_{x}v^{\varepsilon}) + \partial_{x}\alpha(x) + \beta''(\partial_{x}v^{\varepsilon}) \partial_{x}^{2}v^{\varepsilon}\big) \left(\partial_{x}\varphi^{\delta}+\frac{\delta x}{(1+x^2)^{1/2}}\right)  \\
+ (\alpha(x) + \beta'(\partial_{x}v^{\varepsilon}) + \varepsilon)\left(\partial_{x}^{2} \varphi^{\delta}+\frac{\delta}{(1+x^2)^{3/2}} - (\partial_{x}^{2}v^{\varepsilon})^{2}\right)
. 
\end{multline*}
Take a point $(x_{0}, t_{0}) \in \R \times [0, T]$ so that 
$\varphi^\delta(x_{0},t_{0}) = \max_{(x,t) \in \R \times [0,T]} \varphi^\delta(x,t)$.  
If $t_0=0$, then $\varphi^\delta(x,t)\le \varphi^\delta(x_0,0)\le \frac{1}{2}\|\partial_xv_0\|^2_{
\Li}$. 
Sending $\delta\to0$ yields the conclusion. 

Therefore, we only need to consider the case where $t_0\in(0,T]$. 
In this case, we have 
\begin{align*}
&\big(\alpha(x_{0}) + \beta'(\partial_{x} v^{\varepsilon})(x_{0},t_{0}) + \varepsilon\big) \big(\partial_{x}^{2} v^{\varepsilon}(x_{0},t_{0})\big)^{2} \\
 \le &\, 
 - (\partial_{x}f)(x_{0}, \partial_{x}v^{\ep}(x_{0},t_{0})) \partial_{x}v^{\ep}(x_{0},t_{0}) \\
& +
 \frac{\delta x_0}{(1+x_0^2)^{1/2}} \big(- \partial_{u}f(x_{0}, \partial_{x}v^{\ep}(x_{0},t_{0})) + \partial_{x}\alpha(x_{0}) + \beta''(\partial_{x}v^{\varepsilon})\partial^2_{x}v^{\varepsilon}(x_0,t_0) \big)\\
 &
+ (\alpha(x_{0}) + \beta'(\partial_{x}v^{\varepsilon})(x_0,t_0) + \varepsilon)\frac{\delta}{(1+x_0^2)^{2/3}} \\
=: & 
- (\partial_{x}f)(x_{0}, \partial_{x}v^{\ep}(x_{0},t_{0})) \partial_{x}v^{\ep}(x_{0},t_{0}) + O^\ep(\delta). 
\end{align*}
Multplying by $\alpha(x_{0}) + \beta'(\partial_{x}v^{\varepsilon})(x_{0},t_{0}) + \varepsilon>0$, we obtain
\begin{equation*}
\begin{array}{l}
\big(( \alpha(x_{0}) + \beta'(\partial_{x} v^{\varepsilon})(x_{0},t_{0}) + \varepsilon) \partial_{x}^{2} v^{\varepsilon}(x_{0},t_{0})\big)^{2} \vspace{2mm}\\
\hspace{5mm} + (\alpha(x_{0}) + \beta'(\partial_{x}v^{\varepsilon})(x_{0},t_{0}) + \varepsilon) 
(\partial_{x}f)(x_{0},\partial_{x}v^{\ep}(x_{0},t_{0})) \partial_{x}v^{\varepsilon}(x_{0},t_{0}) \le O^\ep(\delta). 
\end{array}
\end{equation*}
Note that 
\begin{align*}
& \big((\alpha(x_{0}) + \beta'(\partial_{x} v^{\varepsilon}(x_{0},t_{0})) + \varepsilon) \partial_{x}^{2} v^{\varepsilon}(x_{0},t_{0})\big)^{2} = \big( \partial_{t}v^{\varepsilon}(x_{0},t_{0}) + f(x_{0}, \partial_{x}v^{\varepsilon}(x_{0},t_{0}) )
 \big)^{2} \\ 
\ge&\,  \frac{1}{2} f(x_{0}, \partial_{x}v^{\varepsilon}(x_{0},t_{0}))^{2} - C
\end{align*}
for some $C\ge0$. 
Therefore, we have 
\begin{align*}
& \frac{1}{2}f(x_{0}, \partial_{x}v^{\varepsilon}(x_{0},t_{0}))^{2} \\
& \le (\|\alpha\|_{\Li} + \|\beta'\|_{\Li}+ 1)|(\partial_{x}f)(x_{0}, \partial_{x}v^{\varepsilon}(x_{0},t_{0}))\partial_{x}v^{\varepsilon}(x_{0},t_{0})|+O^\ep(\delta) + C
\end{align*}
for small $\delta, \ep>0$. 
Therefore, sending $\delta\to0$, in light of (A1), we get $\|\partial_{x}v^{\varepsilon}\|_{\Li} \le M$ for some $M\ge0$
 which is independent of $\ep$. 
\end{proof}

\begin{prop}\label{prop-flux}
Let $u^{\varepsilon}$ be a solution to {\rm\ref{VCL}}. Set  
\begin{equation}\label{def:w-ep}
w^{\varepsilon}(x,t) := - f(x,u^{\varepsilon}) + \alpha(x) \partial_{x}u^\ep + \partial_{x} \beta(u^{\varepsilon}) + \varepsilon \partial_{x} u^{\varepsilon}. 
\end{equation}
We have 
\begin{equation}\label{bdd:flux}
\|w^{\varepsilon}\|_{L^{\infty}} \le \|w^{\varepsilon}(\cdot, 0)\|_{L^{\infty}}. 
\end{equation}
Moreover, there exists a positive constant $C$ such that 
\begin{equation}\label{bdd:u-ep}
\|u^{\varepsilon}\|_{L^{\infty}} \le C. 
\end{equation}
\end{prop}

\begin{proof}
Since $\partial_{x} w^{\varepsilon}(x,t) = \partial_{t} u^{\varepsilon}(x,t)$, $w^{\varepsilon}$ satisfies 
\begin{equation*}
\partial_{t}w^{\varepsilon} + \partial_{u}f(x,u^{\varepsilon}) \partial_{x} w^{\varepsilon} = \alpha(x) \partial_{x}^{2}w^\ep + \partial_{x} ( (\beta'(u^{\varepsilon}) + \varepsilon) \partial_{x} w^{\varepsilon}), 
\end{equation*}
which is a uniformly parabolic equation. By the maximum principle, we obtain  
\begin{equation*}
\|w^{\varepsilon}(\cdot, t)\|_{L^{\infty}} \le \|w^{\varepsilon}(\cdot, 0)\|_{L^{\infty}} 
\quad\text{for all} \ t\ge0. 
\end{equation*}

Setting $v^\ep(x,t):=\int_0^x u^\ep(y,t)\,dy$, we see that $v^\ep$ is a solution to \ref{VHJ}. 
By Proposition \ref{prop-viscosity}, we obtain $\|u^{\varepsilon}\|_{L^{\infty}}=\|v_x^{\ep}\|_{L^{\infty}} \le C$ for 
some $C > 0$. 
\end{proof}

\begin{prop}\label{prop-entropy}
Let $u^{\varepsilon}$ be a solution to {\rm\ref{VCL}}. There exists a subsequence $u^{\varepsilon_{n}}$ and a function $u \in C_{{\rm loc}}([0, \infty) ; L^{1}(\R)) \cap L^{\infty}(\R \times (0,\infty))$ such that 
\begin{equation}\label{conv-u}
u^{\varepsilon_{n}} \to u\ \ \text{ in } C_{{\rm loc}}([0,\infty) ; L^{1}(\R)), 
\end{equation}
and 
\begin{equation}\label{conv-beta}
\alpha(x) \partial_{x} u^{\varepsilon_{n}} + \partial_{x} (\beta(u^{\varepsilon_{n}}) + \varepsilon_{n} u^{\varepsilon_{n}}) \to \alpha(x) \partial_{x}u + \partial_{x} \beta(u) \ \text{ weakly in } L^{2}_{\rm{loc}}(\R \times (0, \infty)) 
\end{equation}
as $n \to \infty$. Moreover, $u$ is an entropy solution to {\rm\ref{CL}} and there exists a positive constant $C$ such that 
\begin{equation}\label{esti:1}
\| - f(x,u) + \alpha(x)\partial_{x}u + \partial_{x}\beta(u)\|_{L^{\infty}(\R)} 
\le C
\quad\text{for} \  t \in (0, \infty). 
\end{equation}
\end{prop}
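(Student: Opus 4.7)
The plan is to follow the standard vanishing-viscosity programme for anisotropic degenerate parabolic-hyperbolic problems: derive uniform-in-$\varepsilon$ a priori estimates on $\{u^{\varepsilon}\}$, extract a compact family, and pass to the limit both in the weak form for the diffusion flux and in the integral form of the entropy inequality.

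First, I would collect the uniform estimates. Proposition \ref{prop-flux} already gives $\|u^{\varepsilon}\|_{\Li}\le C$ and, through \eqref{def:w-ep}, also $\|\alpha(x)\partial_{x}u^{\varepsilon}+\partial_{x}(\beta(u^{\varepsilon})+\varepsilon u^{\varepsilon})\|_{\Li}\le C$. A standard $L^{1}$-contraction/BV-preservation argument for parabolic perturbations of conservation laws, combined with $u_{0}\in BV(\R)$, yields $\|u^{\varepsilon}(\cdot,t)\|_{BV(\R)}\le \|u_{0}\|_{BV(\R)}$ uniformly in $\varepsilon$ and $t$; combined with the identity $\partial_{t}u^{\varepsilon}=\partial_{x}w^{\varepsilon}$, this produces the temporal $L^{1}$-equicontinuity $\|u^{\varepsilon}(\cdot,t+h)-u^{\varepsilon}(\cdot,t)\|_{L^{1}(\R)}\le Ch$. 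Multiplying \ref{VCL} by $u^{\varepsilon}\varphi^{2}$ with $\varphi\in C_{c}^{\infty}(\R\times(0,\infty))$ and integrating by parts produces the energy bound $\sqrt{\alpha(x)+\beta'(u^{\varepsilon})+\varepsilon}\,\partial_{x}u^{\varepsilon}\in L^{2}_{{\rm loc}}(\R\times(0,\infty))$ uniformly in $\varepsilon$.

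Next, applying Kolmogorov--Riesz compactness (spatial $BV$-bound plus temporal $L^{1}$-equicontinuity) I would extract a subsequence with $u^{\varepsilon_{n}}\to u$ in $C_{{\rm loc}}([0,\infty);L^{1}_{{\rm loc}}(\R))$ and a.e., upgrading to \eqref{conv-u} via a uniform tail estimate inherited from the BV-bound and the $\Li$-bound on $w^{\varepsilon_{n}}$. Then \eqref{conv-beta} follows from the $\Li$-bound on $\alpha\partial_{x}u^{\varepsilon_{n}}+\partial_{x}(\beta(u^{\varepsilon_{n}})+\varepsilon_{n}u^{\varepsilon_{n}})$: select a weak-$\ast$ $\Li$ (hence weak $L^{2}_{{\rm loc}}$) subsequential limit and identify it with $\alpha(x)\partial_{x}u+\partial_{x}\beta(u)$ using the distributional convergences $\partial_{x}(\alpha u^{\varepsilon_{n}})\to\partial_{x}(\alpha u)$ and $\partial_{x}\beta(u^{\varepsilon_{n}})\to\partial_{x}\beta(u)$ (both from $u^{\varepsilon_{n}}\to u$ in $L^{1}_{{\rm loc}}$), together with $\varepsilon_{n}\partial_{x}u^{\varepsilon_{n}}\to 0$ in distributions (from $\varepsilon_{n}\|\partial_{x}u^{\varepsilon_{n}}\|_{L^{1}_{{\rm loc}}}\to 0$). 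The estimate \eqref{esti:1} is then immediate: $\|w^{\varepsilon_{n}}\|_{\Li}\le C$ by Proposition \ref{prop-flux}, and passing $w^{\varepsilon_{n}}$ to its weak-$\ast$ $\Li$-limit $-f(x,u)+\alpha(x)\partial_{x}u+\partial_{x}\beta(u)$ combined with weak-$\ast$ lower semi-continuity of the $\Li$-norm yields \eqref{esti:1}.

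Finally, for the entropy inequality I would multiply \ref{VCL} by $\eta'(u^{\varepsilon})\varphi$ with $\varphi\in C_{c}^{\infty}(\R\times[0,\infty);[0,\infty))$ and integrate by parts using the chain-rule relations defining $(\eta,q,r)$; this produces an equality whose right-hand side is the nonnegative dissipation $\int_{0}^{\infty}\int_{\R}\eta''(u^{\varepsilon})(\alpha(x)+\beta'(u^{\varepsilon})+\varepsilon)(\partial_{x}u^{\varepsilon})^{2}\varphi\,dxdt$. The left-hand side passes to the limit by strong $L^{1}_{{\rm loc}}$ convergence and the uniform $\Li$-bound via dominated convergence; dropping the nonnegative $\varepsilon\eta''(u^{\varepsilon})(\partial_{x}u^{\varepsilon})^{2}\varphi$ contribution, the remaining right-hand side is a weighted squared $L^{2}$-norm of $\sqrt{\eta''(u^{\varepsilon_{n}})(\alpha(x)+\beta'(u^{\varepsilon_{n}}))}\,\partial_{x}u^{\varepsilon_{n}}$, and weak $L^{2}_{{\rm loc}}$ lower semi-continuity of the squared norm, together with the identification of its weak limit as $\sqrt{\eta''(u)(\alpha(x)+\beta'(u))}\,\partial_{x}u$ via \eqref{conv-beta}, gives the required inequality. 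The principal obstacle I anticipate is precisely this last identification of the weak $L^{2}_{{\rm loc}}$-limit: a chain-rule argument that must accommodate the potential degeneracy $\alpha(x)+\beta'(u)=0$, typically handled via a Chen--Perthame-type energy identification, is the well-known delicate point of the anisotropic degenerate parabolic-hyperbolic theory.
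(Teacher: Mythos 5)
Your proposal follows essentially the same route as the paper: uniform $L^{\infty}$ and flux bounds from Proposition \ref{prop-flux}, spatial BV and temporal $L^{1}$ estimates, a uniform tail estimate, Kolmogorov compactness, an $L^{2}$ energy estimate for the diffusion flux to pass to the limit in the entropy inequality, and weak-$\ast$ lower semicontinuity to get \eqref{esti:1}; the paper simply outsources the individual estimates to \cite{MNRR}, \cite{KU}, \cite{W17} and the entropy-inequality passage to \cite{CP1}, \cite{W17}. One caveat: since $f$ depends on $x$, the BV bound is not the contraction $\|u^{\varepsilon}(\cdot,t)\|_{BV}\le\|u_{0}\|_{BV}$ you assert but grows in time (the paper's estimate is $e^{M_{1}t}\left(\int_{\R}|\partial_{x}u_{0}|\,dx+M_{2}t\right)$), and likewise the temporal $L^{1}$ modulus is a general $\nu_{T}(h)$ rather than $Ch$; both are still locally uniform in time, which is all that the $C_{{\rm loc}}([0,\infty);L^{1}(\R))$ compactness argument requires.
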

\begin{proof}
We can prove the convergence results in a similar way in \cite{CP1}. 
Indeed, 
by Proposition \ref{prop-flux}, we can take $K>0$ so that $|u^\ep(x,t)|\le K$ for all $(x,t)\in\R\times[0,\infty)$, 
and by \cite[Lemma 7.3]{Coc}, we obtain the following estimate 
\begin{align*}
\int_{\R} |u^{\varepsilon}|\,dx \le 
e^{M_{1}t} \int_{\R} |u_{0}|\,dx + M_{2} M_{1}^{-1}(e^{M_{1}t} -1),
\end{align*}
where $M_{1} := \|\partial_{u}\partial_{x}f\|_{L^{\infty}(\R_{x} ; L^{\infty}([-K,K]))}$, 
$M_{2} := \|\partial_{x}f\|_{L^{1}(\R_{x} ; L^{\infty}([-K,K]))}$
. 
In fact, we can compute 
\begin{align*}
& \frac{d}{dt} \int_{\R} |u^{\varepsilon}| dx = \int_{\R} {\rm sgn}(u^{\varepsilon})(- \partial_{x}f(x,u^{\varepsilon}) + \partial_{x}(\alpha(x)\partial_{x}u^{\varepsilon} + \partial_{x}\beta_{\varepsilon}(u^{\varepsilon}))) dx \\[1ex] 
& \le - \int_{\R} {\rm sgn}(u^{\varepsilon}) (\partial_{x}f)(x,u^{\varepsilon}) dx - \int_{\R} \partial_{x} \left( \int_{0}^{u^{\varepsilon}} {\rm sgn}(s) (\partial_{u}f)(x,s)ds \right)dx \\[1ex]
& \hspace{5mm} + \int_{\R} \left( \int_{0}^{u^{\varepsilon}} {\rm sgn}(s) (\partial_{x} \partial_{u}f)(x,s)ds \right)dx \\[1ex]
& \le M_{2} + M_{1} \int_{\R}|u^{\varepsilon}|dx. 
\end{align*}
By Gronwall's inequality, we can get the desired $L^{1}$-estimate.

Also, by \cite[Theorem 2.29]{MNRR}, \cite[Lemma 3.2]{KU} and \cite[Theorem 2.9]{W17}, respectively, for all $h>0$, we obtain 
\begin{align*}
&
\int_{\R} |\partial_{x} u^{\varepsilon}|\, dx \le e^{M_{1}t} \left( \int_{\R} |\partial_{x}u_{0}|\,dx + M_{3}t \right), \\
& 
\int_{\R} |u^{\varepsilon}(x, t+h) - u^{\varepsilon}(x,t)| dx \le M_{4} \nu_{T}(h), \\
& 
\int_{|x| > R}|u^{\varepsilon}(\cdot,t)|dx \to 0  \ \text{ uniformly for } u^{\varepsilon}  \ \text{ as } R \to \infty,  
\end{align*}
where 
$M_{3} := \|\partial_{x}^{2}f\|_{L^{1}(\R_{x} ; L^{\infty}([-K,K]))}$, 
$M_{4}$ is a positive constant which depends on 
$M_{i}$ $(i = 1,\ldots , 3)$, $\|\alpha\|_{L^{\infty}}$, $\|\beta'\|_{L^{\infty}}$, $T$, $K$, 
$\|u_{0}\|_{BV}:=\|u_0\|_{L^1}+\|\partial_xu_0\|_{L^1}$, and $\nu_{T}: [0,\infty) \to [0,\infty)$ is a modulus of continuity. 
Then, the Kolmogorov compactness theorem implies that there exists a subsequence $u^{\varepsilon_{n}}$ and $u \in C_{{\rm loc}}([0,\infty); L^{1}(\R))$ such that (\ref{conv-u}) holds. Combining this result with $L^{2}$ estimate for $\alpha(x) \partial_{x} u^\ep + \partial_{x} (\beta(u^{\varepsilon}) + \varepsilon u^{\varepsilon})$ (see \cite[Lemma 3.7]{W17}), we can see that the limit function $u$ is an entropy solution to (CL).

We finally give a proof of \eqref{esti:1} here. 
Let $w^\ep$ be the function defined by \eqref{def:w-ep}. 
By Proposition \ref{prop-flux}, $\|w^\ep\|_{L^{\infty}}$ is bounded uniformly for $\ep>0$. 
Thus, by taking a subsequence if necessary there exists $w \in L^{\infty}(\R \times (0,\infty))$ such that 
\begin{equation*}
w^{\varepsilon} \to w \ \ \mbox{weakly-}\ast \mbox{ in } L^{\infty}(\R \times (0,\infty)) 
\quad\text{as} \ \ep\to0. 
\end{equation*}
Moreover, by \eqref{bdd:u-ep}, \eqref{conv-u}, and \eqref{conv-beta} we obtain 
\begin{equation*}
w^{\varepsilon} \to - f(x,u) + \alpha(x)\partial_{x}u + \partial_{x} \beta(u) \ \ \mbox{ weakly in } L^{2}_{{\rm loc}}(\R \times (0,\infty)) 
\ \text{as} \ \ep\to0, 
\end{equation*}
which implies $w = - f(x,u) + \alpha(x)\partial_{x}u + \partial_{x}\beta(u)$ for a.e. in $\R \times (0,\infty)$. 
\end{proof}

\begin{rem}\label{rem:1}
Assumptions (A2), (A3) are used to obtain Proposition \ref{prop-entropy} and the uniqueness of entropy solutions to \ref{CL}. 
When $f(x,u)\equiv f(u)$, assumptions (A4), (A5) can be reduced to $\beta\in\Lipl(\R)$, $u_{0} = \partial_{x}v_{0}\in L^{\infty}(\R)$ rather easily. 
Indeed, since constants are solutions to \ref{CL}, by using the comparison principle, we can easily get the estimates of $\|\partial_x v\|_{\Li}$ and $\|u\|_{\Li}$ 
under the assumption $u_{0} = \partial_{x}v_{0}\in L^{\infty}(\R)$. 
Therefore, by a similar way to that in \cite{MT}, we can obtain the existence and uniqueness of entropy solutions under $u_{0} \in L^{\infty}(\R)$. 
However, since we need (\ref{bdd:flux}), we assume that $u_{0} \in W^{1,\infty}(\R)$ in this case.

\end{rem}

\section{Equivalence}\label{sec:equiv}
It is now ready to prove Theorem \ref{thm:main}. 
\begin{proof}[{\rm Proof of Theorem \ref{thm:main}}]
First, let $v \in \Lip(\R \times [0,\infty))$ be the unique viscosity solution to \ref{HJ}. 
Let $v^{\varepsilon}$ be the classical solution to \ref{VHJ}. 
By Proposition \ref{prop-viscosity}, and the uniqueness of viscosity solutions to \ref{HJ}, we have 
$v^\ep\to v$ locally uniformly on $\R\times[0,\infty)$ as $\ep\to0$. 
For any $\varphi \in C_{0}^{\infty}(\R \times [0,\infty))$, 
\begin{equation*}
\int_{0}^{\infty} \int_{\R} \partial_{x}v^{\varepsilon} \varphi \, dxdt 
= -\int_{0}^{\infty}\int_{\R} v^{\varepsilon} \partial_{x} \varphi \,dxdt 
\to - \int_{0}^{\infty}\int_{\R} v \partial_{x} \varphi \, dxdt 
= \int_{0}^{\infty}\int_{\R} \partial_{x} v  \varphi \, dxdt 
\end{equation*}
as $\ep\to0$. 
Letting $u^{\varepsilon} := \partial_{x} v^{\varepsilon}$, 
by a direct computation, we see that $u^{\varepsilon}$ is a solution to \ref{VCL}. 
By Proposition \ref{prop-entropy}, 
\begin{equation*}
u^{\varepsilon} \to u \ \ \mbox{ in } 
C_{{\rm loc}}([0,\infty); L^{1}(\R))  
\quad \text{as} \ \varepsilon \to 0, 
\end{equation*}
where $u$ is the unique entropy solution to \ref{CL}. 
Hence, 
\begin{equation*}
\int_{0}^{\infty} \int_{\R} u \varphi \,dxdt = \lim_{\varepsilon \to + 0} \int_{0}^{\infty}\int_{\R} u^{\varepsilon} \varphi \,dxdt 
= 
\lim_{\varepsilon \to + 0} \int_{0}^{\infty}\int_{\R} \partial_xv^\ep \varphi \,dxdt 
=
\int_{0}^{\infty} \int_{\R} \partial_{x} v \varphi\, dxdt, 
\end{equation*}
which implies that $u = \partial_{x}v$ a.e. in $\R \times [0,\infty)$. 

\medskip

Next, let  $u$ be an entropy solution to \ref{CL}. 
There exists a solution $u^{\varepsilon}$ to \ref{VCL} such that $u^{\varepsilon} \to u$ in $C_{{\rm loc}}([0,\infty); L^{1}(\R))$. 
Set $v^{\varepsilon}(x,t) := \int_{0}^{x}u^{\varepsilon}(y,t)dy + \int_{0}^{t} (-f(x,u^{\varepsilon})(0,\tau) + ( \alpha(x) \partial_{x} u^{\varepsilon} + \partial_{x}\beta_{\varepsilon}(u^{\varepsilon}))(0,\tau))\, d\tau$. 
Then, $v^{\varepsilon}$ is a classical solution to \ref{VHJ}. 
Moreover, there exists a viscosity solution $v$ such that $v^{\varepsilon} \to v$ locally uniformly on $\R \times [0,\infty)$. 
On the other hand, it also follows that $\int_{0}^{x} u^{\varepsilon}(y,t)\,dy \to \int_{0}^{x} u(y,t)\, dy$ locally uniformly on $\R \times [0,\infty)$ by (\ref{conv-u}). 
Hence, there exists $\widehat{C} \in C([0, \infty))$ such that 
the convergence $\int_{0}^{t} (-f(x,u^{\varepsilon})(0,\tau) + ( \alpha(x) \partial_{x}u^{\varepsilon} + \partial_{x}\beta_{\varepsilon}(u^{\varepsilon}))(0,\tau))\, d\tau \to \widehat{C}(t)$ locally uniformly on $[0,\infty)$ holds, which implies $\widehat{C}(0) = 0$. 
Noting that $u \in \Lil(\R \times (0,\infty))$ is a weak solution to \ref{CL} satisfying $f(x,u) - \alpha(x)\partial_{x}u - \partial_{x}\beta(u) \in L^{\infty}_{{\rm loc}}(\R\times(0,\infty))$. 
Setting $v(x,t) := \int_{0}^{x} u(y,t)\,dy + \widehat{C}(t)$, we first show that $v\in\W_{{\rm loc}}(\R \times (0,\infty))$ is a solution to \ref{HJ} a.e. in $\R \times (0,\infty)$. 
Since $u \in L^{\infty}_{{\rm loc}}(\R \times (0,\infty))$, there exists $A \subset (0,\infty)$ with $\mathscr{L}^{1}(A) = 0$ such that for all $t \in (0,\infty) \backslash A$, $u$ is defined a.e. on $\R$ and $u(\cdot, t) \in L^{\infty}_{{\rm loc}}(\R)$. Then, $v(\cdot, t) \in W^{1, \infty}_{{\rm loc}}(\R)$ for such values of $t$. Moreover, for all $t \in (0,\infty) \backslash A$, $\varphi \in C_{0}^{\infty}(\R \times (0,\infty))$, 
$\int_{\R} v(x,t) \partial_{x} \varphi(x,t)\,dx = - \int_{\R} u(x,t) \varphi(x,t)\,dx$. 
Integrating this on $t$ yields that $u = \partial_{x}v$ a.e. in $\R \times (0,\infty)$. 
Since $u$ is a weak solution, we have for all $\varphi \in C_{0}^{\infty}(\R \times (0,\infty))$
\begin{align*}
& \int_{0}^{\infty} \int_{\R} (f(x,u) - \alpha(x) \partial_{x}u - \partial_{x}\beta(u)) \partial_{x} \varphi\,dxdt 
 = - \int_{0}^{\infty} \int_{\R} 
u \partial_{t}\varphi
\, dxdt \\
=&\, 
\int_{0}^{\infty} \int_{\R} v \partial_{x}\partial_{t} \varphi 
\,dxdt. 
\end{align*}

Since $f(x,u) - \alpha(x) \partial_{x}u - \partial_{x}\beta(u) \in L^{\infty}_{{\rm loc}}(\R \times (0,\infty))$, we have $\partial_{t} v \in L^{\infty}_{{\rm loc}}(\R \times (0, \infty))$ and, 
\begin{equation*}
\partial_{t} v = - f(x,u) + \alpha(x) \partial_{x}u + \partial_{x} \beta(u) = - f(x,\partial_{x}v) + \alpha(x) \partial_{x}^{2}v + \partial_{x} \beta(\partial_{x} v)
\quad a.e. \ \text{in} \  \R \times (0,\infty). 
\end{equation*}
Consequently, we see that $v \in W^{1, \infty}_{{\rm loc}}(\R \times (0,\infty))$ and $v$ is a solution to \ref{HJ} a.e. in $\R \times (0,\infty)$. 

\medskip
Finally, letting $u \in C_{{\rm loc}}([0,\infty) ; L^{1}(\R))$ is the unique entropy solution to \ref{CL} we prove that $v:=\int_{0}^{x} u(y,t)\,dy 
+ \widehat{C}(t)$ is a viscosity solution. 
Note that $v(\cdot, t) \in AC(\R)$ and $\partial_{x} v = u$ a.e. in $\R \times (0,\infty)$ for all $t \in (0,\infty)$. 
Moreover, $v$ is a solution to \ref{HJ} a.e. in $\R \times (0,\infty)$. 
It is easy to see that 
\begin{equation*}
\lim_{t \to 0} |v(x,t) - v_{0}(x)| \le \lim_{t \to 0} \left( \int_{0}^{x} |u(\xi, t) - u_{0}(\xi)| d\xi + |\widehat{C}(t)| \right) = 0. 
\end{equation*}

Suppose that $v$ is not a viscosity solution to \ref{HJ}. 
Let $\overline{v} \in 
C(\R \times [0, \infty))$ be the unique viscosity solution to \ref{HJ}. 
Then, $\overline{u}:=\partial_{x} \overline{v}$ is an entropy solution to \ref{CL}. 
Moreover, the uniqueness of entropy solutions implies that for all $\varphi \in C_{0}^{\infty}(\R \times (0,\infty))$, 
\begin{equation*}
\int_{0}^{\infty}\int_{\R} (v - \overline{v}) \partial_{x}\varphi \,dxdt = - \int_{0}^{\infty}\int_{\R} (\partial_{x} v - \partial_{x} \overline{v}) \varphi \,dxdt = -  \int_{0}^{\infty}\int_{\R} (u - \overline{u}) \varphi \,dxdt = 0.
\end{equation*}
By the arbitrariness of $\varphi$, we obtain $v = \overline{v}$ a.e. in $\R \times (0,\infty)$. 
\end{proof}

\begin{rem}
We can easily check that when $f(x,u)=f(u)$ 
 for all $(x,u)\in\R\times\R$, 
(A5) can be reduced to (A5)'
to obtain Theorem \ref{thm:main} (cf. Remark \ref{rem:1}). 

On the other hand, under the additional assumption $\partial_{x}f(x,0) \in L^{\infty}(\R)$, we can get another $L^{\infty}$-estimate 
\begin{equation*}
\| u(\cdot,t) \|_{L^{\infty}} \le (\| u_{0} \|_{L^{\infty}} + M_{5}t)e^{M_{1}t}, 
\end{equation*}
for $t > 0$, where $M_{5} := \| \partial_{x}f(\cdot,0) \|_{L^{\infty}}$ by \cite[(4.6)]{Kr}. 
Then, we can remove $BV(\R)$ in (A5). 
At first, we can get the well-posedness for (CL) under $u_{0} \in L^{1}(\R) \cap L^{\infty}(\R)$ along \cite[Theorem 1.2]{CP1}. Next, we can remove $L^{1}(\R)$ using the method in \cite[Theorem 9]{MT}. In fact, using the cut-off function 
\begin{equation*}
\begin{array}{c}
\zeta_{n}(x) = 1\ \ \mbox{ in } [-n,n],\ \ \zeta_{n}(x)=0\ \ \mbox{ in } \R\setminus [-n-1,n+1], \\[1ex] 
\zeta_{n}(x) = x + n +1 \ \ \mbox{ in } [-n-1,-n],\ \ \zeta_{n}(x) = -x +n+1\ \ \mbox{ in } [n,n+1], 
\end{array}
\end{equation*}
we set $u_{0}^{m,n}(x) := u_{0}^{+}(x) \zeta_{n}(x) + u_{0}^{-}(x) \zeta_{m}(x)$ for $u_{0} \in W^{1,\infty}(\R)$. By $u_{0}^{m,n} \in W^{1,\infty}(\R) \cap L^{1}(\R)$, we can get the entropy solution $u^{m,n}$ to (CL) with $u_{0}^{m,n}$. Moreover, $u^{m,n}$ is monotone with respect to $m$, $n$ by the comparison principle. In addition, we can get 
\begin{equation*}
\| u^{m,n}(\cdot,t) \|_{L^{\infty}} \le (\| u_{0}^{m,n} \|_{L^{\infty}} + M_{5}t)e^{M_{1}t} \le (\| u_{0} \|_{L^{\infty}} + M_{5}t)e^{M_{1}t}, 
\end{equation*}
for $t > 0$, and therefore $u^{m,n}$ is bounded independent of $m$, $n$. Then, $u^{m,n}$ converges as $m \to \infty$ and $n \to \infty$ respectively, and the limit function is the entropy solution to (CL) with $u_{0} \in W^{1,\infty}(\R)$ in a similar way in \cite[Theorem 9]{MT}. 
Hence, (A5) can be reduced (A5)' under $\partial_{x}f(x,0) \in L^{\infty}(\R)$ to obtain Theorem \ref{thm:main}.
\end{rem}

\section{Applications}\label{sec:app}
In this section, we \textit{always} assume (A6). 

\subsection{Large-time behavior for \ref{HJ}}
In this subsection we assume (A7), 
therefore \ref{HJ} turns out to be the equation of the form
\begin{equation}\label{eq:CL2}
\partial_{t}v + f(\partial_{x} v)= \beta'(\partial_{x} v)\partial^2_{x} v
\quad \text{in} \ \T\times(0,\infty).
\end{equation}

We recall a result on the large time behavior for degenerate parabolic-hyperbolic equations in \cite{P19}. 
\begin{thm}[{\cite[Theorem 1.1, Corollary 1.1]{P19}}]\label{thm:panov}
Assume that {\rm (A1)--(A4)}, {\rm (A5)'}, {\rm (A6)} and {\rm (A7)} 
 hold. 
Let $u$ be the entropy solution to {\rm \ref{CL}}. There exists $(U,d) \in L^{\infty}(\T)\times\R$ such that 
\begin{equation*}
{\rm ess}\lim_{t \to + \infty} \|u(\cdot,t) - U(\cdot - dt)\|_{L^1(\T)} = 0 
\end{equation*}
with $\int_{\T} U(y)\, dy = 
0 = \int_{\T} u_{0}(x)\, dx$, 
$U$ is a periodic function,
and the functions $f(u) - du$ and $\beta(u)$ are constant on 
$[ \mbox{{\rm ess}}\inf_{\T} U,  \mbox{{\rm ess}}\sup_{\T} U]$. 
Moreover, 
if $f(u) - du$ and $\beta(u)$ are not constant simultaneously in any neighborhood of 
$0$ 
for any $d \in \R$, then $U\equiv 
0$ on $\T$. 
\end{thm}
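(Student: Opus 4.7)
The plan is to prove Theorem \ref{thm:panov} by a compactness-and-characterization argument for the nonlinear $L^1$-contraction semigroup associated with \ref{CL} under (A7), treating $u(\cdot,t)$ as an orbit in the space of $\Z$-periodic $BV$ functions and analyzing its $\omega$-limit set in $L^1(\T)$.

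First I would collect uniform-in-$t$ a priori estimates. Since (A7) makes the equation translation invariant in $x$, Kru\v{z}kov-type comparison and $L^1$-contraction on $\T$ yield $\|u(\cdot,t)\|_{\Li(\T)}\le\|u_0\|_{\Li(\T)}$, $\|u(\cdot,t)\|_{BV(\T)}\le\|u_0\|_{BV(\T)}$, and conservation of the mean $\int_\T u(\cdot,t)\,dy=\overline{u}_0$ for every $t\ge 0$. Combined with the $L^1$-time continuity from Proposition \ref{prop-entropy}, Helly's theorem gives precompactness of $\{u(\cdot,t)\}_{t\ge 0}$ in $L^1(\T)$. I then define the $\omega$-limit set $\Omega(u_0)=\{U\in L^1(\T):u(\cdot,t_n)\to U\text{ in }L^1(\T)\text{ for some }t_n\to\infty\}$; it is nonempty, compact, mean-preserving, and invariant under the entropy semigroup $S(t)$ of \ref{CL}. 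A standard argument (recurrence of $\omega$-limits together with non-strict contraction) upgrades the $L^1$-contraction of $S(t)$ to an \emph{isometry} on $\Omega(u_0)$.

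The heart of the proof is to identify each $U\in\Omega(u_0)$ as a traveling wave. Integrating the entropy inequality with $\eta(u)=u^2/2$ over the periodic domain yields
$$\frac{d}{dt}\int_\T\tfrac12 u(x,t)^2\,dx\le -\int_\T\beta'(u)(\partial_xu)^2\,dx\le 0,$$
so $t\mapsto\int_\T u(t)^2\,dx$ is non-increasing with a finite limit. For $U\in\Omega(u_0)$ the invariance of $\Omega(u_0)$ forces this functional to be \emph{constant} along the orbit of $U$, hence the dissipation integral vanishes and $\beta'(u)(\partial_xu)^2\equiv 0$ a.e. along the orbit. Thus $\beta$ is constant on $[\mbox{{\rm ess}}\inf_\T U,\mbox{{\rm ess}}\sup_\T U]$, and on this range \ref{CL} collapses to the scalar conservation law $\partial_tu+\partial_xf(u)=0$. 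The isometry of $S(t)$ on $\Omega(u_0)$---applied with $U_2=U_1(\cdot+h)$ and combined with translation invariance---rules out entropy-dissipating shocks along the orbit, which forces $f(u)-du$ to be constant on the same range for some $d\in\R$, so that $S(t)U=U(\cdot-dt)$.

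Finally, uniqueness of the traveling-wave profile with prescribed mean $\overline{u}_0$ (up to spatial translation), combined with the connectedness and invariance of $\Omega(u_0)$, promotes subsequential convergence to full-orbit convergence, yielding $\|u(\cdot,t)-U(\cdot-dt)\|_{L^1(\T)}\to 0$. The ``Moreover'' clause is then immediate: if $f(u)-d'u$ and $\beta(u)$ are not simultaneously constant in any neighborhood of $\overline{u}_0$ for any $d'\in\R$, the essential range of $U$ must collapse to $\{\overline{u}_0\}$, forcing $U\equiv\overline{u}_0$. The main obstacle is the shock-exclusion step---ruling out entropy-dissipating discontinuities along the orbit of an element of $\Omega(u_0)$ while handling the possibly degenerate diffusion $\beta$---which requires a careful combination of the Kru\v{z}kov entropy inequality with the rigidity derived from the isometric action of $S(t)$ on $\Omega(u_0)$.
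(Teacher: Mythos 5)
First, a point of reference: the paper does not prove Theorem \ref{thm:panov} at all --- it is imported verbatim from Panov \cite{P19} (Theorem 1.1 and Corollary 1.1 there), so there is no internal proof to compare against. Your overall architecture --- uniform $L^{\infty}$ and $BV$ bounds with conservation of the mean, precompactness of the orbit in $L^{1}(\T)$, Dafermos' theorem that an $L^{1}$-contraction semigroup acts isometrically on the $\omega$-limit set of a precompact orbit, and vanishing of the entropy dissipation on that set --- is indeed the strategy underlying \cite{P19} and Chen--Perthame \cite{CP2}, so the plan is the right one.

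There is, however, a genuine gap at the decisive step. From the vanishing of the $\eta(u)=u^{2}/2$ dissipation you correctly obtain $\partial_{x}\beta(u)=0$ along the orbit of any $U$ in the $\omega$-limit set, hence (using continuity and monotonicity of $\beta$) that $\beta$ is constant on $[\mathrm{ess\,inf}_{\T}U,\mathrm{ess\,sup}_{\T}U]$ and the equation reduces there to $\partial_{t}u+\partial_{x}f(u)=0$. But the next assertion --- that the isometry ``rules out entropy-dissipating shocks'' and that this ``forces $f(u)-du$ to be constant on the same range'' --- is not a proof. Absence of entropy dissipation does not by itself imply that the flux is affine on the essential range: a dissipation-free weak solution need not a priori be a traveling wave. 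This implication is exactly the hard content of \cite{P19}, and it is obtained there through the kinetic formulation: once the kinetic defect measure vanishes, the function $\chi(\xi;u(x,t))$ is transported freely at speed $f'(\xi)$ for each $\xi$, and the incompatibility of distinct transport speeds with a single spatially periodic, non-constant profile forces $f'$ to be constant on the essential range. Nothing in your sketch supplies a substitute for this argument. Two smaller but real gaps: you apply the isometry to the pair $(U_{1},U_{1}(\cdot+h))$, yet $U_{1}(\cdot+h)$ need not belong to $\Omega(u_{0})$, so Dafermos' lemma must be extended to pairs drawn from two precompact orbits via a common recurrence sequence; and the passage from subsequential to full convergence is not a consequence of ``uniqueness of the profile up to translation'' alone --- one must synchronize the times $t_{n}$ with the drift $dt_{n}\bmod 1$, or argue through the monotone functionals $t\mapsto\|u(\cdot,t)-k\|_{L^{1}(\T)}$, $k\in\R$, which determine the limit profile's distribution function.
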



\begin{proof}[Proof of Theorem {\rm\ref{thm:large-HJ}}]
Let $v$ be the viscosity solution to \ref{VHJ}. 
Set $\tilde{v}(x,t):= v(x,t) + f(0)t$ for $x\in\T$, $t\ge0$. Then,  
\begin{equation}\label{eq:HJ2}
\partial_{t} \tilde{v} + f(\partial_{x} \tilde{v}) - f(0) = \beta'(\partial_{x} \tilde{v})\partial^2_{x} \tilde{v}
\quad \text{in} \ \T\times(0,\infty).
\end{equation}  
Let $u := \partial_{x}v$. 
In light of Theorem \ref{thm:main}, $u$ is the entropy solution to \ref{CL} with $u_{0} = \partial_{x} v_{0} \in L^{\infty}(\T)$.  
By Theorem \ref{thm:panov}, there exists $(U,d) \in L^{\infty}(\T)\times\R$ such that 
\begin{equation}\label{conv:L1}
{\rm ess}\lim_{t \to \infty}(u(t,\cdot) - U(\cdot-dt)) = 0 \ \ \mbox{ in } L^{1}(\T)
\end{equation}
with $\int_{\T}U(y)\, dy = \int_{\T}u_0(y)\, dy =\int_{\T}\partial_x v_0(y)\,dy = 0$. 

Let $I=[a,b]$ be the set defined by \eqref{def:I}. 
In case $a=b=0$, we set $\tilde{V}(x)\equiv0$. 
In case $a<0<b$, 
 by Theorem \ref{thm:panov}
$f(u) - du$ and $\beta(u)$ are constant on $[ \mbox{{\rm ess}}\inf_{\T} U,  \mbox{{\rm ess}}\sup_{\T} U]$, 
and $0\in I$, which implies that 
$f(u) - du\equiv f(0)$ and $\beta(u)\equiv\beta(0)$ 
on $[ \mbox{{\rm ess}}\inf_{\T} U,  \mbox{{\rm ess}}\sup_{\T} U]$.  
Moreover, since $\beta$ is constant in a neighborhood of $u=0$, 
$\beta'(u)=0$ on this set. 
Here, $U(x-dt)$ is a weak solution to the linear equation $\partial_{t}u + d \partial_{x} u = 0$, and therefore it is the entropy solution to the linear equation.  
Therefore, 
we see that $\tilde{V}(x-dt) := \int_{0}^{x}U(y-dt)dy - d \int_{0}^{t}U(-d\tau)d\tau$ is a viscosity solution to \eqref{eq:HJ2} with $v_0=
\int_{0}^{x}U(y)dy$ by Theorem \ref{thm:main}. 
Setting 
\[
m(t) := \min_{x \in \T} \big(\tilde{v}(x,t) - \tilde{V}(x - dt)\big), 
\quad
M(t) := \max_{x \in \T} \big(\tilde{v}(x,t) - \tilde{V}(x - dt)\big), 
\]
we see that $m$ is nondecreasing, and $M$ is nonincreasing  by the comparison principle for \eqref{eq:HJ2}. 
Hence, the limits $m_{\infty}: = \lim_{t \to \infty} m(t)$, $M_{\infty} := \lim_{t \to \infty} M(t)$ exist.  
Moreover, letting $x_1, x_2\in\T$ so that 
$M(t)=\tilde{v}(x_1,t) - \tilde{V}(x_1-dt)$ and $m(t)=\tilde{v}(x_2,t) - \tilde{V}(x_2-dt)$, we have 
\begin{align*}
&M(t)-m(t)
=
\int_{x_2}^{x_1}\frac{\partial}{\partial x}\big(\tilde{v}(y,t)-\tilde{V}(y-dt)\big)\,dy
\le\int_{x_2}^{x_1}\left|\frac{\partial}{\partial x}\big(\tilde{v}(y,t)-\tilde{V}(y-dt)\big)\right|\,dy
 \\
\le& 
\int_{\T}\left|\frac{\partial}{\partial x}\big(\tilde{v}(y,t)-\tilde{V}(y-dt)\big)\right|\,dy
= 
\int_{\T}\left|u(y,t)-U(y-dt)\right|\,dy. 
\end{align*}
Therefore, in light of \eqref{conv:L1} and the continuity of $M(t)$ and $m(t)$, 
we obtain $M_\infty=m_\infty =: m_{\ast}$, which implies  
\[
\|v(\cdot,t)+f(0)t-V(\cdot-dt)\|_{\Li(\T)}\to0
\quad\text{as} \ t\to\infty, 
\]
where $V(x - dt) = \tilde{V}(x - dt) + m_{\ast}$. 

Finally noting that $[ \mbox{{\rm ess}}\inf_{\T} U,  \mbox{{\rm ess}}\sup_{\T} U]\subset[a,b]$, and $ V(y_2)-V(y_1)=\int_{y_1}^{y_2}U\,dy$, 
we have 
\[
a(y_2-y_1)\le V(y_2)-V(y_1) \le b(y_2-y_1)
\quad\text{for all} \ y_1<y_2. 
\]
\end{proof}

\subsection{Large-time behavior for \ref{CL}}
In this subsection we assume (A8), 
therefore \ref{CL} turns out to be the equation of the form
\begin{equation}\label{eq:CL2}
\partial_{t} u + \partial_{x} f(x,u) = \partial_{x} ( \alpha(x) \partial_{x}u )
\quad \text{in} \ \T\times(0,\infty).
\end{equation}
We recall a result on the large time behavior for degenerate viscous Hamilton--Jacobi equations in \cite{CGMT}. 
\begin{thm}[{\cite[Theorem 1.1]{CGMT}}]\label{thm:CGMT}
Assume that {\rm (A1)--(A4), (A5)', (A6), (A8)} hold. 
Let $v$ be the viscosity solution to {\rm \ref{HJ}}. 
There exists $(V,c) \in \Lip(\T)\times\R$ such that 
\begin{equation*}
\lim_{t \to + \infty} \|v(\cdot,t) - \big(V-ct)\|_{\Li(\T)}= 0,  
\end{equation*}
where $(V,c)$ is a viscosity solution of the ergodic problem $f(x, \partial_{x}V) = \alpha(x) \partial_{x}^{2} V + c$ in $\T$. 
\end{thm}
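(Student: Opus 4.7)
The plan is the classical two-step strategy for long-time behavior of viscous Hamilton--Jacobi equations on the torus with convex Hamiltonian: first construct an ergodic pair $(V,c) \in \Lip(\T) \times \R$ solving $f(x,\partial_x V) = \alpha(x) \partial_x^2 V + c$ in the viscosity sense, and then show that the shifted solution $\tilde v(x,t) := v(x,t) + ct$, which solves the same evolution equation with $V$ as a stationary solution, converges uniformly to $V$ as $t \to \infty$.

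For the ergodic problem, I would use the small-discount method. For each $\lambda > 0$, solve $\lambda u_\lambda + f(x,\partial_x u_\lambda) = \alpha(x) \partial_x^2 u_\lambda$ on $\T$ in the viscosity sense; the boundedness of $f(\cdot,0)$ from (A1) combined with the comparison principle yields $\|\lambda u_\lambda\|_{\Li(\T)} \le \|f(\cdot,0)\|_{\Li}$, while coercivity of $u \mapsto f(x,u)$ (consequence of (A1) together with the strict convexity (A8)) produces a uniform Lipschitz bound on $V_\lambda := u_\lambda - \min_\T u_\lambda$. Standard equicontinuity arguments give $\mathrm{osc}_\T(\lambda u_\lambda) \to 0$, so along a subsequence $\lambda_n u_{\lambda_n} \to -c$ uniformly and $V_{\lambda_n} \to V \in \Lip(\T)$; stability of viscosity solutions passes to the ergodic equation. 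For the convergence step, the comparison principle for \ref{HJ} on $\T$ applied to $\tilde v$ and $V \pm \mathrm{const}$ gives that $m(t) := \min_\T(\tilde v(\cdot,t) - V)$ is nondecreasing and $M(t) := \max_\T(\tilde v(\cdot,t) - V)$ is nonincreasing, so $m_\infty \le M_\infty$ exist; together with equi-Lipschitz bounds in $x$ and equicontinuity in $t$ from Proposition \ref{prop-viscosity}, $\{\tilde v(\cdot, t_n + \cdot)\}$ is precompact on compact time intervals. The whole problem reduces to showing $m_\infty = M_\infty$.

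This last reduction is the main obstacle, since $\alpha$ is allowed to vanish on open sets, which precludes any direct appeal to the strong maximum principle and forces one to leverage (A8). Following the strategy of \cite{CGMT}, I would use Evans' nonlinear adjoint method: regularize to a smooth solution $v^\sigma$ and consider the linearized adjoint equation
\[
-\partial_t m^\sigma - \partial_x\!\bigl(\partial_u f(x,\partial_x v^\sigma) m^\sigma\bigr) = \partial_x^2\!\bigl(\alpha\, m^\sigma\bigr)
\]
with a probability-measure terminal condition; strict convexity of $u \mapsto f(x,u)$ is then what forces the limiting adjoint measure to localize asymptotically on the projected Aubry set, on which any two viscosity solutions of the ergodic problem differ only by an additive constant. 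Consequently, any subsequential limit of $\tilde v(\cdot, t_n + \cdot)$ is a time-independent viscosity solution of the ergodic equation, hence equal to $V + k$ for some $k \in \R$; the monotonicity of $m$ and $M$ forces $m_\infty = M_\infty$ and yields the uniform convergence in $\Li(\T)$. The technical core is precisely the adjoint localization under degenerate diffusion, which must replace the elliptic/parabolic regularity arguments available when $\alpha$ is uniformly positive.
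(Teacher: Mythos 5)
The paper does not prove this statement: Theorem \ref{thm:CGMT} is imported verbatim from \cite[Theorem 1.1]{CGMT} and is used as a black box in the proof of Theorem \ref{thm:large-CL}, so there is no internal proof to compare against. The relevant comparison is therefore with the argument of \cite{CGMT} itself.

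Your first two steps are sound and standard: the small-discount construction of the ergodic pair $(V,c)$ (the bound $\|\lambda u_\lambda\|_{\Li(\T)}\le\|f(\cdot,0)\|_{\Li}$ by comparison with constants, the coercivity-based Lipschitz estimate under (A1), (A8), and stability in the limit $\lambda\to0$), and the monotonicity of $m(t)=\min_{\T}(\tilde v(\cdot,t)-V)$ and $M(t)=\max_{\T}(\tilde v(\cdot,t)-V)$ by comparison. You also correctly isolate the crux, $m_\infty=M_\infty$, and correctly note that the degeneracy of $\alpha$ forbids any appeal to the strong maximum principle, so that the nonlinear adjoint method together with strict convexity of $u\mapsto f(x,u)$ must carry the weight. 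The gap is in the endgame. The mechanism you invoke --- localization of the limiting adjoint measure on the projected Aubry set, plus a rigidity statement that viscosity solutions of the ergodic problem on that set differ by additive constants --- is neither carried out nor the route of \cite{CGMT}; the stated purpose of that paper's ``new method'' is precisely to avoid the Aubry set, whose structure and comparison properties are not established for degenerate second-order ergodic problems. What \cite{CGMT} actually extracts from the adjoint equation is the decay $\|\partial_t v(\cdot,t)+c\|_{\Li(\T)}\to0$ as $t\to\infty$: one integrates the one-sided convexity inequality for $f$ in the gradient variable against the adjoint measure to control time-averages of $\partial_t v^\sigma+c$, and then upgrades to uniform decay; convergence to $V-ct$ then follows from this decay, the equi-Lipschitz bounds, and the monotonicity of $m$ and $M$. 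As written, your proposal names the decisive estimate but does not supply it, and the substitute you sketch would itself require a nontrivial Aubry-set theory that is unavailable in this degenerate setting; this is a genuine gap at the heart of the argument.
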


\begin{proof}[Proof of Theorem {\rm\ref{thm:large-CL}}]
Let $u$ be the entropy solution to \eqref{eq:CL2}. 
By Theorem \ref{thm:main} setting $v(x,t):=\int_0^xu(y,t)\,dy 
+ \widehat{C}(t)$ is the viscosity solution to 
\ref{HJ} with $v_0:=\int_0^xu_0\,dy$. 
Therefore, due to Theorem \ref{thm:CGMT}, 
there exists $(V,c) \in \Lip(\T)\times\R$ which satisfies 
$f(x, \partial_{x}V) = \alpha(x) \partial_{x}^{2} V + c$ in $\T$ 
in the sense of viscosity solutions, and 
\begin{equation}\label{conv:int}
\int_0^xu(y,t)\,dy + \widehat{C}(t) -(V(x)-ct)\to0 \ \ \mbox{ as } t \to \infty \ \text{ in} \ C(\T). 
\end{equation}
Take $U\in L^{\infty}(\T)$ so that $V(x)=\int_0^xU(y)\,dy$. 
Then, 
$\int_0^x (u(y,t)-U(y))\,dy + \widehat{C}(t)+ 
ct\to0$ uniformly for $x\in\T$ as $t\to\infty$, which implies 
\[
\int_{\T} \left( u(y,t) + \widehat{C}(t) -\big(U(y)-ct\big) \right) \,dy\to0 \quad\text{as} \ t\to\infty. 
\]
\end{proof}
If $f$ is independent of $x$, then we have $L^1$ convergence in Theorem \ref{thm:large-CL}. More precisely, we obtain 
\begin{prop}
Assume that {\rm(A1)--(A4), (A5)', (A6)} and 
\begin{enumerate}
\item[{\rm(A8)'}] $f(x,u) \equiv f(u) \in C^2(\R)$, $u\mapsto f(u)$ is strictly convex, $\beta(u)=0$ for all 
$u \in \R$ 
\end{enumerate}
hold. 
Let $u$ be the entropy solution to {\rm \ref{CL}}. There exists $U \in L^{\infty}(\T)$ such that 
\begin{equation*}
{\rm ess }\lim_{t \to + \infty} \int_{\T} |u(y,t) - U(y)| \,dy 
= 0. 
\end{equation*}
\end{prop}
\begin{proof}
By (\ref{conv:int}), we see that 
\begin{equation*}
\int_0^x u(y,t)\,dy + \widehat{C}(t) - \left( \int_{0}^{x}U(\xi)\,d\xi + V(0)-ct \right) \to 0 \ \ \mbox{ as } t \to \infty, 
\end{equation*}
for all $x \in (0,1)$. 
From this it immediately follows that 
\begin{equation}\label{conv:int-2}
\int_{x_{1}}^{x_{2}} (u(y,t) - U(y))\, dy \to 0 \ \ \mbox{ as } t \to \infty  
\end{equation}
for any $x_{1}, x_{2} \in (0,1)$. 

On the other hand, due to the independence of $x$ of $f$, 
we have  
\begin{equation*}
\| u(\cdot,t) \|_{L^{\infty}(x_{1},x_{2})} \le \| u(\cdot,t) \|_{L^{\infty}(\R)} \le C,
\ \text{and} \  
\int_{x_{1}}^{x_{2}} |\partial_{x}u|\, dx \le \int_{\R}|\partial_{x}u_{0}|\,dx 
\end{equation*}
for all $x_{1}, x_{2} \in (0,1)$ with $x_{1} < x_{2}$. 
Let $\{t_{n}\}$ be a sequence of the Lebesgue points of $u(x,\cdot)$ satisfying $t_{n} \to \infty$ as $n \to \infty$. 
By Helly's selection theorem, there exists a subsequence $\{t_{n_{k}}\}$ and a function $\overline{u} \in BV(x_{1},x_{2})$ such that 
\begin{equation}\label{eq:sub-u}
u(x, t_{n_{k}}) \to \overline{u}(x)\ \ \mbox{ a.e. in } (x_{1},x_{2})\ \ \mbox{ as } k \to \infty. 
\end{equation}
By the Lebesgue dominated convergence theorem, we see that 
$u(\cdot, t_{n_k})\to \overline{u}$ in $L^1(x_1,x_2)$ as $k\to\infty$ 
for all $(x_1, x_2)\subset(0,1)$, 
and 
\begin{equation}\label{conv:int-3}
\int_{x_{1}}^{x_{2}} u(x,t_{n_{k}})dx \to \int_{x_{1}}^{x_{2}} \overline{u}(x) dx \ \ \mbox{ as } k \to \infty. 
\end{equation}
Moreover, by (\ref{conv:int-2}), (\ref{conv:int-3}), it is deduced that 
\begin{equation*}
\int_{x_{1}}^{x_{2}} \overline{u}(x)\, dx = \int_{x_{1}}^{x_{2}} U(x)\, dx \ \ \mbox{ for all } (x_{1},x_{2}) \subset (0,1),  
\end{equation*}
which implies that $\overline{u}(x) = U(x)$ a.e. in $(0,1)$. 
Therefore, we obtain $u(\cdot, t_{n_k})\to U$ in $L^1(x_1,x_2)$ as $k\to\infty$. 
Since this holds for any subsequnece satisfying \eqref{eq:sub-u}, 
we obtain the conclusion. 
\end{proof}

%



\end{document}